\theoremstyle{plain}
\newtheorem{thm}{\indent\bf Theorem}[section]
\newtheorem{lem}[thm]{\indent\bf Lemma}
\newtheorem{prop}[thm]{\indent\bf Proposition}
\theoremstyle{definition}
\newtheorem{rem}{\indent\it Remark}[section]
\numberwithin{equation}{section}
\numberwithin{figure}{section}
\def \re {\mathrm{Re\,}}
\def \im {\mathrm{Im\,}}
\def \z {\mathfrak{z}}
\def \sn {\mathrm{sn}}
\def \cn {\mathrm{cn}}
\begin{document}
\title[Fifth Painlev\'e transcendents]
{Two error bounds of the elliptic asymptotics 
for the fifth Painlev\'e transcendents}
\author[Shun Shimomura]{Shun Shimomura} 
\address{Department of Mathematics, 
Keio University, 
3-14-1, Hiyoshi, Kohoku-ku,
Yokohama 223-8522 
Japan\quad
{\tt shimomur@math.keio.ac.jp}}
\date{}
\begin{abstract}
For the fifth Painlev\'e equation it is known that a general solution is
represented asymptotically by an elliptic function in cheese-like strips
near the point at infinity. 
We present an explicit asymptotic formula for the error term of this 
expression, which leads to an estimate for its magnitude  
as was conjectured. 
An analogous formula is obtained for the error term of
the correction function associated with the Lagrangian. 
\par
2020 {\it Mathematics Subject Classification.} 
{34M55, 34M56, 34M40, 34M60, 33E05.}
\par
{\it Key words and phrases.} 
{elliptic asymptotic representation; fifth Painlev\'e transcendents; 
isomonodromy deformation; monodromy data; 
Jacobi elliptic functions.}
\end{abstract}
\maketitle
\allowdisplaybreaks
\section{Introduction}\label{sc1}
The fifth Painlev\'e equation 
\begin{equation*}
\tag*{(P$_\mathrm{V}$)}
  y''=  \Bigl(\frac 1{2y} + \frac 1{y-1}
  \Bigr) (y')^{2} - \frac {y'}{ x}
  +\frac{(y-1)^2}{x^2} \Bigl(a_{\theta}y 
 - \frac{b_{\theta}} {y}\Bigr) + c_{\theta} \frac{y}{x} -\frac{y(y+1)}{2(y-1)},
 \end{equation*}
in which $8a_{\theta}=(\theta_0-\theta_1+\theta_{\infty} )^2,$ 
$8b_{\theta}=(\theta_0 -\theta_1 - \theta_{\infty})^2$, 
$c_{\theta}=1-\theta_0-\theta_1$ with $\theta_0, \theta_1, \theta_{\infty} 
\in\mathbb{C}$, 
governs the isomonodromy deformation of a linear system of the form
\begin{align*}
\frac{d\Xi}{d\lambda}= & \Bigl(\frac{x}2 \sigma_3+\frac{\mathcal{A}_0}{\lambda}
+\frac{\mathcal{A}_1}{\lambda-1} \Bigr)\Xi,
\\
&\sigma_3=\begin{pmatrix} 1 & 0 \\ 0 & -1  \end{pmatrix}, \quad
 \mathcal{A}_0=\begin{pmatrix}  \z+\theta_0/2  & -u(\z+\theta_0) \\
\z/u & -\z-\theta_0/2  
\end{pmatrix},  
\\
& \mathcal{A}_1=\begin{pmatrix} 
-\z-(\theta_0+\theta_{\infty})/2 & uy(\z+(\theta_0-\theta_1+\theta_{\infty})/2) 
\\
- (uy)^{-1}(\z+(\theta_0+\theta_1+\theta_{\infty})/2) 
 & \z+(\theta_0 +\theta_{\infty})/2  
\end{pmatrix}  
\end{align*}
(cf. \cite{A-K}, \cite[(1.1)]{S}, \cite[(3.1)]{S1}) with the
monodromy data $(M^0, M^1)=((m^0_{ij}), (m^1_{ij})) \in SL_2(\mathbb{C})^2$ 
defined along loops surrounding $\lambda=0$ and $ 1$, respectively.
Then a general solution $y(x)$ of (P$_{\mathrm{V}}$) is parametrised by 
$(M^0,M^1)$ \cite[Section 2]{A-K}. 
As in \cite[Theorem 2.1]{S, S1}, for each $\phi$ such that $0<|\phi|<\pi/2,$
$y(x)$ admits an expression of the form
\begin{equation}\label{1.1}
\frac{y(x)+1}{y(x)-1}= A_{\phi}^{1/2} \sn ((x-x_0)/2+\Delta(x); A^{1/2}_{\phi})
\end{equation}
with $\Delta(x)=O(x^{-2/9+\varepsilon})$ for any $\varepsilon$ satisfying
$0<\varepsilon<2/9$ as $x=e^{i\phi}t \to \infty$ through the cheese-like
strip $S(\phi,t_{\infty}, \kappa_0,\delta_0),$ where $v=\sn (z;k)$ is the
Jacobi elliptic function such that $v_z^2=(1-v^2)(1-k^2v^2),$ and
the symbols $A_{\phi}$, $x_0$ and $S(\phi,t_{\infty},\kappa_0,\delta_0)$ are
as in {\bf (1)} and {\bf (3)} below. 
Since $A_{\phi}$ does not depend on the solution $y(x)$, the leading term of 
the expression above contains the integration constant $x_0$ depending on 
$(M^0,M^1)$ and the other
integration constant appears in the error term $\Delta(x).$ 
Moreover $\Delta(x)$ may be treated in studying, say, the $\tau$-function
\cite[p.~121]{K-3}, and degeneration into trigonometric asymptotics
\cite[Section 4]{K-3}. 
For these facts detailed study on $\Delta(x)$ is desirable. 
Under the supposition $\Delta(x)=O(x^{-1})$, an asymptotic form 
of $\Delta(x)$ containing the other integration constant is discussed in 
\cite[Theorem 2.3 and Corollary 2.4]{S}.  
For the $\tau$-function associated with (P$_{\mathrm{I}}$) Iwaki \cite{Iwaki}, 
by the method of topological recursion, obtained a conjectural full-order 
expansion yielding the elliptic expression of solutions. 
\par
In this paper we unconditionally present an explicit expression of 
$\Delta(x)$, which leads to the estimate $\Delta(x)=O(x^{-1})$
as was conjectured.
The correction function $B_{\phi}(t)$ \cite[(5.5)]{S1} for
the Lagrangian of $y(x)$ contains information about asymptotics 
(see also \cite[Section 3]{K-3}). 
An analogous explicit formula is obtained for the error term of the asymptotic 
expression of $B_{\phi}(t)$. 
\par
Our results are stated in Theorems \ref{thm2.1}, \ref{thm2.2} and \ref{thm2.3}.  
In Section \ref{sc3}, from a system of equations equivalent to (P$_{\mathrm{V}}$)
we derive integral equations containing the error term $h(x)=
\Delta(x)/2.$ The final section is devoted to the proofs of main theorems 
by using these equations, in which 
our argument is quite different from those in \cite{I-K}, 
\cite[Chapter 8]{FIKN}, \cite{J-K} and \cite{Novo} applied to
(P$_{\mathrm{II}}$) and (P$_{\mathrm{I}}$).
\par
Throughout this paper we use the following symbols.
\par
{\bf (1)} For each $\phi\in \mathbb{R}$, $A_{\phi}\in \mathbb{C}$ is a unique 
solution of the Boutroux equations
\begin{equation*}
\re e^{i\phi} \int_{\mathbf{a}} \sqrt{\frac{A_{\phi}-z^2}{1-z^2} } dz =
\re e^{i\phi} \int_{\mathbf{b}} \sqrt{\frac{A_{\phi}-z^2}{1-z^2} } dz =0
\end{equation*}
\cite[Section 7]{S}. Here $\mathbf{a}$ and $\mathbf{b}$ are basic cycles as
in Figure \ref{cycles1} on the elliptic curve $\Pi^*=\Pi^*_+\cup \Pi^*_-$ 
given by $w(A_{\phi},z)=\sqrt{(1-z^2)(A_{\phi}-z^2)}$ 
such that $\Pi^*_+$ and $\Pi^*_-$ are
glued along the cuts $[-1,-A^{1/2}_{\phi}] \cup [A^{1/2}_{\phi},1]$ with
$0\le \re A_{\phi}^{1/2} \le 1;$ and the branches of the square roots
$$
\sqrt{\frac{A_{\phi}-z^2}{1-z^2}} = \frac{\sqrt{A_{\phi}-z^2}}{\sqrt{1-z^2}},
\quad
\sqrt{(A_{\phi}-z^2)(1-z^2)} = \sqrt{A_{\phi}-z^2}\, \sqrt{1-z^2}
$$
are determined by $z^{-1}\sqrt{A_{\phi}-z^2} \to i$ and 
$z^{-1}\sqrt{1-z^2} \to i$
as $z\to \infty$ on the upper sheet $\Pi^*_+$.
{\small
\begin{figure}[htb]
\begin{center}
\unitlength=0.8mm
\begin{picture}(80,37)(0,3)
\put(0,17){\makebox{$-1$}}
\put(22,3){\makebox{$-A_{\phi}^{1/2}$}}
\put(73,25){\makebox{$1$}}
\put(52,39){\makebox{$A_{\phi}^{1/2}$}}
\thinlines
\put(10,25.5){\line(2,-1){17}}
\put(10,24.5){\line(2,-1){17}}
\put(70,25.5){\line(-2,1){17}}
\put(70,24.5){\line(-2,1){17}}
\qbezier(35,33.5) (40,37) (46,38)
\qbezier(9,31.5) (14,32.7) (19,30.5)
\put(46,38){\vector(4,1){0}}
\put(9,31.5){\vector(-4,-1){0}}
\put(31,31){\makebox{$\mathbf{a}$}}
\put(20,29){\makebox{$\mathbf{b}$}}
\put(65,8){\makebox{$\Pi^*_{+}$}}
\thicklines
\put(10,25){\circle*{1}}
\put(27,16.5){\circle*{1}}
\put(53,33.5){\circle*{1}}
\put(70,25){\circle*{1}}
\qbezier(24,18) (25,25) (40,32.5)
\qbezier(40,32.5) (56,40) (56.5,31.7)
\qbezier[15](40,18) (54,25.5) (56,30)
\qbezier[15](24,16) (26,11) (40,18)
\qbezier(6,27) (10,32.5) (25.9,24.8)
\qbezier(29,23.2) (35.5,18) (34,13)
\qbezier(6,27) (3.5,21) (14,15)
\qbezier (14,15)(30,7)(34,13)
\end{picture}
\end{center}
\caption{Cycles $\mathbf{a},$ $\mathbf{b}$ on $\Pi^*$}
\label{cycles1}
\end{figure}
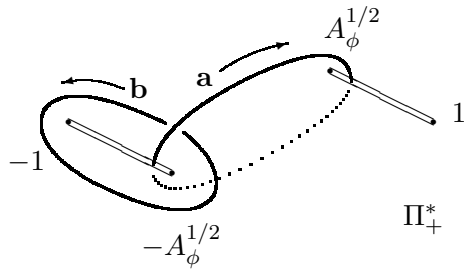
}
\par
{\bf (2)} The periods of $\Pi^*$ along $\mathbf{a}$ and $\mathbf{b}$ are
$$
\Omega_{\mathbf{a}} =\int_{\mathbf{a}} \frac{dz}{w(A_{\phi}, z)}, \quad
\Omega_{\mathbf{b}} =\int_{\mathbf{b}} \frac{dz}{w(A_{\phi}, z)}, 
$$
and write
$$
\mathcal{E}_{\mathbf{a}} =\int_{\mathbf{a}} \sqrt{\frac{A_{\phi}-z^2}
{1-z^2}}\, dz, \quad
\mathcal{E}_{\mathbf{b}} =\int_{\mathbf{b}} \sqrt{\frac{A_{\phi}-z^2}
{1-z^2}}\, dz. 
$$
\par
{\bf (3)} Set
$$
x_0 \equiv \frac{-1}{\pi i} (\Omega_{\mathbf{b}} \log(m^0_{21}m^1_{12})
+\Omega_{\mathbf{a}} \log \mathfrak{m}_{\phi} ) -(\tfrac 12 \Omega_{\mathbf{a}}
+\Omega_{\mathbf{b}})(\theta_{\infty}+1) -\tfrac 12 \Omega_{\mathbf{a}}
 \mod 2\Omega_{\mathbf{a}}
\mathbb{Z} +2\Omega_{\mathbf{b}}\mathbb{Z},
$$
in which $\mathfrak{m}_{\phi}=m^0_{11}$ if $-\pi/2<\phi<0$, and $=e^{-\pi i
\theta_{\infty}} (m^1_{11})^{-1}$ if $0<\phi<\pi/2.$ For given positive
numbers $\kappa_0,$ $\delta_0$ and $t_{\infty}$,
$$
S(\phi,t_{\infty}, \kappa_0,\delta_0)=\{ x=e^{i\phi}t \,|\, \re t>t_{\infty},
\,\,\, |\im t|<\kappa_0 \}\setminus \bigcup_{\sigma\in \mathcal{P}_0}
\{|x-\sigma|<\delta_0 \}
$$
with 
$
\mathcal{P}_0 =\{\sigma \,|\, \sn((\sigma-x_0)/2; A^{1/2}_{\phi})=\infty\}
=\{x_0+\Omega_{\mathbf{a}}\mathbb{Z} +\Omega_{\mathbf{b}}(2\mathbb{Z}+1)\},
$
and
$$
\check{S}(\phi,t_{\infty},\kappa_0,\delta_0)=
{S}(\phi,t_{\infty},\kappa_0,\delta_0)\setminus 
 \bigcup_{\sigma\in \mathcal{Q}} \{|x-\sigma|<\delta_0 \}
$$
with
$\mathcal{Q}= \{{\sigma} \,|\, \sn (({\sigma}-x_0)/2; A^{1/2}
_{\phi})= \pm A^{-1/2}_{\phi}, \pm 1 \},$ in which $\delta_0$ is also supposed
so small that $\{|x-\sigma_1|=\delta_0\} \cap \{|x-\sigma_2|=\delta_0\}=
\emptyset$ for any $\sigma_1,$ $\sigma_2 \in \mathcal{P}_0\cup \mathcal{Q},$
$\sigma_1\not=\sigma_2.$
For $\sigma=e^{i\phi}t_{\sigma} \in \mathcal{Q}$ let $l(\sigma)$ be the line 
defined
by $x=e^{i\phi}(\re t_{\sigma}+i\eta)$ with $\eta \ge \im t_{\sigma}$ 
if $\im t_{\sigma} \ge 0$ (respectively, $\eta \le \im t_{\sigma}$ if 
$\im t_{\sigma} <0$); and, if necessary, modify $l(\sigma)$ 
not to touch other circles $\{|x-\sigma'|=\delta_0\}$ with $\sigma'\in
\mathcal{P}_0\cup\mathcal{Q}\setminus\{\sigma\}$
by suitable replacement of local segments on $l(\sigma)$ with arcs. Then let 
$\check{S}_{\mathrm{cut}}(\phi,t_{\infty},\kappa_0, \delta_0)$ denote
$\check{S}(\phi,t_{\infty},\kappa_0, \delta_0)$ equipped with the cuts along  
$l(\sigma)$ or its modification for all $\sigma \in \mathcal{Q}$ (cf.
Figure \ref{strips}).   
{\small
\begin{figure}[htb]
\begin{center}
\unitlength=0.85mm
\begin{picture}(80,70)(-40,-45)

  \qbezier (-27,11) (0,20) (27,29)
  \qbezier (-27,-29) (0,-20) (27,-11)

\put(20,10){\circle{1.4}} \put(20,10){\circle{3}}
\put(20,-10){\circle{1.4}} \put(20,-10){\circle{3}}
\put(-20,10){\circle{1.4}} \put(-20,10){\circle{3}}
\put(-20,-10){\circle{1.4}} \put(-20,-10){\circle{3}}
\put(0,10){\circle{1.4}} \put(0,10){\circle{3}}
\put(0,-10){\circle{1.4}} \put(0,-10){\circle{3}}

\put(10,0){\circle*{0.7}} \put(10,0){\circle{3}}
\put(10,10){\circle*{0.7}} \put(10,10){\circle{3}}
\put(10,20){\circle*{0.7}} \put(10,20){\circle{3}}
\put(10,-10){\circle*{0.7}} \put(10,-10){\circle{3}}

\put(-10,0){\circle*{0.7}} \put(-10,0){\circle{3}}
\put(-10,10){\circle*{0.7}} \put(-10,10){\circle{3}}
\put(-10,-20){\circle*{0.7}} \put(-10,-20){\circle{3}}
\put(-10,-10){\circle*{0.7}} \put(-10,-10){\circle{3}}
\put(-10,-20){\circle*{0.7}} \put(-10,-20){\circle{3}}

\put(23.5,-31){\circle{1.4}} \put(26,-32){\makebox{$\in \mathcal{P}_0$\,,}}
\put(42.8,-31){\circle*{0.7}} \put(45,-32){\makebox{$\in \mathcal{Q}$}}

\put(-20,-43){\makebox{(a) \quad$\check{S}(\phi,t_{\infty},\kappa_0,\delta_0)$}}
\end{picture}
\qquad\quad
\begin{picture}(80,70)(-40,-45)

  \qbezier (-27,11) (0,20) (27,29)
  \qbezier (-27,-29) (0,-20) (27,-11)

\put(20,10){\circle{1.4}} \put(20,10){\circle{3}}
\put(20,-10){\circle{1.4}} \put(20,-10){\circle{3}}
\put(-20,10){\circle{1.4}} \put(-20,10){\circle{3}}
\put(-20,-10){\circle{1.4}} \put(-20,-10){\circle{3}}
\put(0,10){\circle{1.4}} \put(0,10){\circle{3}}
\put(0,-10){\circle{1.4}} \put(0,-10){\circle{3}}

\put(10,0){\circle*{0.7}} \put(10,0){\circle{3}}
\put(10,10){\circle*{0.7}} \put(10,10){\circle{3}}
\put(10,20){\circle*{0.7}} \put(10,20){\circle{3}}
\put(10,-10){\circle*{0.7}} \put(10,-10){\circle{3}}

\qbezier (9.53,21.42) (9.03,22.92)  (8.97,23.10)
\qbezier (-9.53,-21.42) (-9.03,-22.92)  (-8.97,-23.10)

\put(9.53,11.42){\line(-1,3){3.5}} 
\put(-9.53,-11.42){\line(1,-3){3.5}} 
\put(10.47,-1.42){\line(1,-3){4.5}} 
\put(-10.47,1.42){\line(-1,3){4.5}} 
\put(10.47,-11.42){\line(1,-3){1.5}} 
\put(-10.47,11.42){\line(-1,3){1.5}} 

\put(-10,0){\circle*{0.7}} \put(-10,0){\circle{3}}
\put(-10,10){\circle*{0.7}} \put(-10,10){\circle{3}}
\put(-10,-20){\circle*{0.7}} \put(-10,-20){\circle{3}}
\put(-10,-10){\circle*{0.7}} \put(-10,-10){\circle{3}}

\put(-20,-43){\makebox{(b) \quad$\check{S}_{\mathrm{cut}}
(\phi,t_{\infty},\kappa_0,\delta_0)$}}
\end{picture}
\end{center}
\caption{Cheese-like strips} 
\label{strips}
\end{figure}
}
\par 
{\bf (4)} For $\im \tau >0,$
$$
\vartheta(z,\tau)=\sum_{n\in \mathbb{Z}} e^{\pi i\tau n^2 +2\pi i zn}
$$
with $\vartheta'(z,\tau) =(d/dz)\vartheta(z,\tau)$ is the $\vartheta$-function
\cite{H}, \cite{WW}. Note that $\vartheta(z\pm 1,\tau)=\vartheta(z,\tau),$
$\vartheta(z\pm \tau,\tau)=e^{-\pi i(\tau \pm 2z)}\vartheta(z,\tau).$
\par
{\bf (5)} We write $f \ll g$ or $g \gg f$ if $f=O(g).$
\section{Main results}\label{sc2}
Our results are stated as follows.
\begin{thm}\label{thm2.1}
Suppose that 
$0<|\phi|< \pi/2.$ Let $y(x)$ be the solution of $(\mathrm{P}_{\mathrm{V}})$ 
corresponding to $(M^0,M^1)=((m_{ij}^0),(m_{ij}^1))$ 
with $m_{11}^0 m_{11}^1 m_{21}^0m_{12}^1 \not=0.$ Then 
$$
\frac{y(x) +1}{y(x)-1}
=A^{1/2}_{\phi} \mathrm{sn} ((x-x_0)/2 ; A^{1/2}_{\phi}) +O(x^{-1})
$$
as $x \to \infty$ through the cheese-like strip
$ S(\phi, t_{\infty}, \kappa_0, \delta_0)$, where
$\kappa_0 $ is a given number, $\delta_0$ a given small number,
and $t_{\infty}=t_{\infty}(\kappa_0,\delta_0)$ a large number depending on 
$(\kappa_0,\delta_0).$ 
\end{thm}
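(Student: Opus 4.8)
The plan is to upgrade the known bound $\Delta(x)=O(x^{-2/9+\varepsilon})$ from \eqref{1.1} to $\Delta(x)=O(x^{-1})$ by a bootstrap argument based on integral equations for the error term $h(x)=\Delta(x)/2$. First I would recall, from the system equivalent to $(\mathrm{P}_{\mathrm{V}})$ used to establish the elliptic representation, the exact equation governing the phase shift: writing $s=(x-x_0)/2$ and letting $U(x)=\tfrac{y+1}{y-1}$, the function $U$ satisfies a second-order ODE that is a perturbation of the pendulum-type equation $(U')^2=\tfrac14(1-U^2)(1-A_\phi U^2)+\text{(lower order)}$ solved exactly by $A_\phi^{1/2}\sn(s;A_\phi^{1/2})$, with the perturbation terms being $O(x^{-1})$ relative to the main terms, uniformly on the cheese-like strip away from the poles (this is why the $\delta_0$-discs around $\mathcal{P}_0$ are excised). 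The deviation $h=\Delta/2$ then solves an equation of the schematic form $h'=x^{-1}F(s+h,x)+\text{quadratic in }h'$, which upon one integration along the strip becomes a Volterra-type integral equation $h(x)=h(x_*)+\int_{x_*}^x x'^{-1}G(\,\cdot\,)\,dx' + (\text{remainder})$.

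The key steps, in order, are: (i) derive the integral equation for $h(x)$ from the equivalent system — I would expect Section~\ref{sc3} of the paper to do exactly this, so I may quote it; (ii) insert the a priori bound $h=O(x^{-2/9+\varepsilon})$ into the right-hand side and estimate the integral, using that the integrand is $O(x'^{-1})$ times bounded elliptic-function factors, so that $\int^x x'^{-1}\cdot O(1)\,dx'$ is $O(\log x)$ — not yet enough — and then (iii) iterate: the structure of $G$ should be such that the genuinely dangerous $x'^{-1}$ terms are total derivatives or oscillatory, so that integration by parts along $s$ (using the periodicity relations for $\vartheta$ in {\bf (4)}, or equivalently the quasi-periods $\Omega_{\mathbf a},\Omega_{\mathbf b}$ and $\mathcal{E}_{\mathbf a},\mathcal{E}_{\mathbf b}$) produces an extra factor $x^{-1}$, collapsing the estimate to $h(x)=O(x^{-1})$. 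The second integration constant surfaces here as the value of the boundary term $h(x_*)$ at the base point, consistent with the discussion after \eqref{1.1}. Finally (iv) I would check uniformity: the constants in $O(x^{-1})$ must not blow up as $x$ passes near a pole $\sigma\in\mathcal{P}_0$, which is handled by the geometry of $S(\phi,t_\infty,\kappa_0,\delta_0)$ and by choosing $t_\infty=t_\infty(\kappa_0,\delta_0)$ large enough that the Neumann series for the Volterra equation converges.

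The main obstacle I anticipate is step (iii): controlling the accumulated effect of the $O(x^{-1})$ forcing over a strip of \emph{infinite} length. A naive $L^1$ bound on $x'^{-1}$ over $\re t\in(t_\infty,\infty)$ diverges logarithmically, so the argument must exploit cancellation — either because the forcing term, after substituting the elliptic leading order, integrates to something bounded over each period lattice cell (so that the sum over $\sim \re t$ cells telescopes or is dominated by a convergent series), or because a change of variables to the "straightened" phase variable turns the equation into one where the secular term is visibly absent. Getting this cancellation \emph{explicit} — rather than just $O(x^{-1})$ — is presumably what forces the use of the $\vartheta$-function machinery in {\bf (4)} and is the technical heart of Theorems~\ref{thm2.2}–\ref{thm2.3}; for the weaker statement of Theorem~\ref{thm2.1} it should suffice to extract the bound from the explicit formula once it is in hand, or to run the bootstrap with care about the logarithms. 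A secondary nuisance is the branch-cut bookkeeping for $w(A_\phi,z)$ and the passage between the $\sn$-form and the $\vartheta$-quotient form of the solution, but this is routine given {\bf (1)}–{\bf (4)}.
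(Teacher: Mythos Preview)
Your proposal is correct and follows essentially the paper's approach: derive an integral equation for $h$ from the equivalent system (Section~\ref{sc3}, Proposition~\ref{prop3.2}), show that the principal integral $\int^x_\infty F_1(\psi_0,b_0)\xi^{-1}\,d\xi$ is $O(x^{-1})$ via explicit $\vartheta$-function primitives and an integration-by-parts-with-shift argument (Lemma~\ref{lem4.2}, Propositions~\ref{prop4.3}--\ref{prop4.4}), and then bootstrap the remainder. One clarification on the mechanics: the logarithm you anticipate in step~(ii) never arises, because the cancellation of step~(iii) is established once and for all for the leading $F_1$-integral (independently of the a~priori bound on $h$), while the bootstrap acts only on the nonlinear remainder $O(x^{-2\mu})$, doubling the exponent $\mu\to 2\mu$ at each pass starting from $\mu=1/9$; the final extension from $\check S_{\mathrm{cut}}$ to the full strip $S$ is by the maximum modulus principle applied to the holomorphic difference $(y+1)/(y-1)-\psi_0$.
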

Set
\begin{align*}
\psi_0(x)&= A_{\phi}^{1/2} \sn ((x-x_0)/2; A_{\phi}^{1/2}),
\\
b_0(x)&=\beta_0 -\frac{2\mathcal{E}_{\mathbf{a}}}{\Omega_{\mathbf{a}}}x
-\frac{8}{\Omega_{\mathbf{a}}} \frac{\vartheta'}{\vartheta}\Bigl(\frac 1
{2\Omega_{\mathbf{a}}}(x-x_0), \tau_0 \Bigr), \quad 
\tau_0=\frac{\Omega_{\mathbf{b}}}{\Omega_{\mathbf{a}}},
\\
\mathfrak{b}(x)&=\frac{\mathcal{E}_{\mathbf{a}}}4(x-x_0)+
 \frac{\vartheta'}{\vartheta}\Bigl(\frac 1
{2\Omega_{\mathbf{a}}}(x-x_0), \tau_0 \Bigr) 
 =-\frac{\Omega_{\mathbf{a}}}{8}(b_0(x)-b_0(x_0)) 
\end{align*}
with
$$
\beta_0 = -\frac 8{\Omega_{\mathbf{a}}}( \log (m^0_{21}m^1_{12}) 
+\pi i(\theta_{\infty}+1)), \quad
 b_0(x_0)=\beta_0-\frac{2\mathcal{E}_{\mathbf{a}}}{\Omega_{\mathbf{a}}}x_0,
$$
where $\psi_0(x),$ $b_0(x)$ and $\mathfrak{b}(x)$ are bounded in 
$S(\phi,t_{\infty},\kappa_0,\delta_0)$. 
\begin{thm}\label{thm2.2}
The error term $\Delta(x)=h(x)/2$ in \eqref{1.1} is represented by
\begin{equation*}
h(x)= -\frac{2((\theta_0-\theta_1)^2+\theta_{\infty}^2)}{A_{\phi}-1}x^{-1}
 - \int^x_{\infty}F_1(\psi_0, b_0)\frac{d\xi}{\xi} -\frac 32 
\int^x_{\infty} F_1(\psi_0,b_0)^2 \frac{d\xi}{\xi^2} +O(x^{-2}),
\end{equation*}
with
$$
F_1(\psi_0,b_0)= \frac{4(\theta_0+\theta_1)\psi_0 -b_0}{2(A_{\phi}-\psi_0^2)},
\quad \psi_0=\psi_0(\xi), \quad b_0=b_0(\xi).
$$
Here 
$$
\int^x_{\infty} F_1(\psi_0,b_0) \frac{d\xi}{\xi} \ll x^{-1}, \quad
\int^x_{\infty} F_1(\psi_0,b_0)^2 \frac{d\xi}{\xi^2} \ll x^{-1}
$$
as $x \to \infty$ through $\check{S}_{\mathrm{cut}}
(\phi,t_{\infty},\kappa_0,\delta_0).$
Furthermore, 
$$
x h(x)= h_0\beta_0^2 +h_1(x) \beta_0 +h_2(x) +O(x^{-1}),
$$
where $h_0= (1/8)A_{\phi}^{-1}(1-A_{\phi})^{-1}$, $h_1(x)\ll 1,$ $h_2(x)\ll 1.$
\end{thm}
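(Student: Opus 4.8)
The plan is to combine the isomonodromy equations equivalent to $(\mathrm{P}_{\mathrm{V}})$ with the known leading-order elliptic representation to produce an integral equation for $h(x)=\Delta(x)$, and then iterate. Write $(y+1)/(y-1)=\psi_0(x)+\psi_0'(x)\,h(x)+\cdots$ (more precisely, expand $\sn$ around the shifted argument); since $\psi_0$ solves the limiting autonomous problem exactly, substituting this ansatz into the nonautonomous system produces, after division by the appropriate factor, an equation of the schematic form $h'(x)=\text{(explicit bounded function)}\cdot x^{-1}+\text{(quadratic in }h)+(\text{higher order})$. Integrating from $\infty$ along a path in the cut region $\check S_{\mathrm{cut}}$ then gives an integral equation $h(x)=\mathcal{L}(x)+\int_\infty^x(\cdots)$ whose first iterate already produces the stated three explicit terms: the algebraic $x^{-1}$ term with coefficient $-2((\theta_0-\theta_1)^2+\theta_\infty^2)/(A_\phi-1)$, the single integral $\int_\infty^x F_1(\psi_0,b_0)\,d\xi/\xi$, and the correction $-\tfrac32\int_\infty^x F_1(\psi_0,b_0)^2\,d\xi/\xi^2$, with everything else collected into $O(x^{-2})$. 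The quantities $b_0(x)$ and $\mathfrak b(x)$ enter because the correction function $B_\phi$ and the $\vartheta'/\vartheta$ term encode the linear-in-$h$ response of the elliptic background; $F_1$ is exactly the linearisation of the right-hand side in the $h$-direction divided by $A_\phi-\psi_0^2$, which is why it appears squared at the next order (the $-\tfrac32$ coming from the curvature of $\sn$, i.e. from the $\psi_0''$-type term in the Taylor expansion).

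For the bound $\int_\infty^x F_1(\psi_0,b_0)\,d\xi/\xi\ll x^{-1}$ (and likewise the squared integral), the key point is that $\psi_0$ and $b_0$ are elliptic in the rescaled variable, so $F_1(\psi_0(\xi),b_0(\xi))$ is a bounded meromorphic function of $\xi$ along $\check S$ whose only singularities are poles at the points of $\mathcal{P}_0\cup\mathcal{Q}$, which the deleted discs and the cuts $l(\sigma)$ avoid. Writing $\int_\infty^x F_1\,d\xi/\xi$ and integrating by parts once, $\int_\infty^x F_1\,d\xi/\xi = \big[\Phi(\xi)/\xi\big]_\infty^x+\int_\infty^x\Phi(\xi)\,d\xi/\xi^2$, where $\Phi(\xi)=\int^\xi F_1\,d\xi'$ is a primitive; by quasi-periodicity $\Phi(\xi)$ grows at most linearly in $\xi$ (it is of the form $c\xi+(\text{bounded}) + (\text{periodic})$), provided the linear growth rate is controlled — in fact the definition of $b_0$ and the Boutroux conditions on $A_\phi$ are arranged precisely so that the secular/linear part of $F_1$ integrates to something that is genuinely $O(1)$ along the strip, not $O(\xi)$. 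Hence $\Phi(\xi)/\xi$ and $\Phi(\xi)/\xi^2$ are both $O(\xi^{-1})$ uniformly on $\check S_{\mathrm{cut}}$, giving the claim. The squared integral is handled the same way, using that $F_1^2$ is again elliptic with a primitive of linear growth, and the extra $\xi^{-2}$ makes the estimate only easier.

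The main obstacle will be the uniformity of these estimates up to the boundary of the cheese-like strip and near the deleted discs: $F_1(\psi_0,b_0)$ blows up like $(\xi-\sigma)^{-2}$ at $\sigma\in\mathcal{P}_0$ (where $\psi_0^2\to A_\phi$) and like $(\xi-\sigma)^{-1}$ or worse at $\sigma\in\mathcal{Q}$, so one must check that on $\check S_{\mathrm{cut}}$ — with radius-$\delta_0$ discs removed and with the cuts $l(\sigma)$ preventing the path of integration from encircling a pole — the primitive $\Phi$ remains single-valued and the linear growth constant is independent of how close the path comes to $\partial S$. This requires a careful choice of the path from $\infty$ to $x$ (staying a fixed distance from all singularities, using the arcs allowed in the definition of $\check S_{\mathrm{cut}}$) together with the quasi-periodicity relations $\vartheta(z\pm\tau,\tau)=e^{-\pi i(\tau\pm 2z)}\vartheta(z,\tau)$ to track how $\vartheta'/\vartheta$ and hence $b_0$ transform across fundamental cells, so that the "$c\xi$" part of $\Phi$ can be identified explicitly and shown to be cancelled (or absorbed into the explicit $x^{-1}$ term) rather than contributing a spurious $O(1)$ or $O(\log x)$ discrepancy. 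Once that bookkeeping is done, the estimates $\ll x^{-1}$ follow, and feeding them back into the integral equation both justifies the $O(x^{-2})$ remainder in the formula for $h(x)$ and, a fortiori, yields $\Delta(x)=O(x^{-1})$ of Theorem \ref{thm2.1}.
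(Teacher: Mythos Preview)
Your broad strategy---derive an ODE for $h$, integrate from $\infty$, bound the resulting integrals by integration by parts using bounded primitives---matches the paper's, and your integration-by-parts bound for $\int F_1\,d\xi/\xi$ is essentially correct (the paper makes it explicit via theta-function primitives, Lemma~4.2, plus a shift-symmetry trick for the $\mathfrak{b}$-piece). But two of your explanations are genuinely wrong and would derail the calculation.

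First, $F_1$ is \emph{not} the linearisation in $h$, and the $-\tfrac32$ does \emph{not} come from $\psi_0''$. The substitution $\psi=\psi_0(x+h)$ into $4(\psi')^2=(1-\psi^2)(A_\phi-\psi^2)-(1-\psi^2)(4(\theta_0+\theta_1)\psi-b)x^{-1}+\cdots$ gives $(1+h')^2=1-2F_1(\psi,b)x^{-1}+2F_2(\psi)x^{-2}$; here $F_1$ is the explicit $x^{-1}$ forcing, and Taylor-expanding the square root yields $h'=-F_1x^{-1}+(F_2-\tfrac12 F_1^2)x^{-2}-(F_1)_\psi\psi_0'\,h\,x^{-1}+\cdots$. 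The $-\tfrac12 F_1^2$ is from the square root; the \emph{extra} $-1$ making $-\tfrac32$ comes from the linear-in-$h$ cross term $(F_1)_\psi\psi_0' h\,x^{-1}$, which after one integration by parts (using $h'\approx -F_1/\xi$) becomes $\int F_1^2\,d\xi/\xi^2$. Your schematic ``$h'=\text{(bounded)}x^{-1}+\text{(quadratic in }h)$'' omits this cross term and would give $-\tfrac12$ instead.

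Second, you cannot write $F_1(\psi_0,b_0)$ directly: the equation produces $F_1(\psi,b)$ with the \emph{true} $b$, and $b_0$ is not obtained by linearising the background---it is an independent unknown (essentially the Lagrangian correction) satisfying its own equation $b'=-2(A_\phi-\psi^2)+4\psi'+(4(\theta_0+\theta_1)\psi-b)x^{-1}$. The paper therefore derives a \emph{coupled} pair of integral equations for $h$ and $\chi:=b-b_0$ (Proposition~3.2); the term $\tfrac12\int(\chi-b_0'h)(A_\phi-\psi_0^2)^{-1}\,d\xi/\xi$ must be shown to be $O(x^{-2})$ separately. Finally, your ``first iterate already produces the stated three terms'' is too optimistic: the remainder in the integral equation is $O(x^{-2\mu})$ under the hypothesis $h\ll x^{-\mu}$, and the a-priori input is only $\mu=1/9$, so one bootstraps $\mu\mapsto 2\mu$ several times before the remainder reaches $O(x^{-2})$.
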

\begin{rem}\label{rem2.0}
Since $(y(x)+1)(y(x)-1)^{-1}=\psi_0(x+h(x))$,
we have, in each neighbourhood of $\sigma\in \check{S}_{\mathrm{cut}}
(\phi,t_{\infty},\kappa_0,\delta_0)$,
\begin{equation*}
\frac{y(x)+1}{y(x)-1} -A^{1/2}_{\phi} \mathrm{sn}((x-x_0)/2;A_{\phi}^{1/2})
=\sum_{j=1}^{\infty} \frac {\psi_0^{(j)}(x)}{j!} h(x)^j \sim \psi_0'(x)h(x),
\end{equation*}
which implies the single-valuedness of $h(x)$ in $\check{S}(\phi,
t_{\infty},\kappa_0,\delta_0).$  
In showing Theorem \ref{thm2.2}, for convenience' sake, the integral 
representations have been treated
in $\check{S}_{\mathrm{cut}}(\phi,t_{\infty},\kappa_0,\delta_0)$, in which
a contour joining $x$ to $\infty$ is topologically specified, to avoid the
possible multi-valuedness of each integral around the pole 
at $\sigma \in \mathcal{Q}.$
\end{rem}
\begin{rem}\label{rem2.0a}
Note that (P$_{\mathrm{V}}$) is equivalent to the system
\begin{align*}
&\z= \frac {x(y-y')}{2(y-1)^2}+\frac{\theta_0+\theta_1}{2(y-1)}-\frac 14
(\theta_0-\theta_1+\theta_{\infty}),
\\
&x\z'= y\z(\z+\tfrac 12(\theta_0-\theta_1+\theta_{\infty}))
-y^{-1}(\z+\theta_0)(\z+\tfrac 12(\theta_0+\theta_1+\theta_{\infty}))
\end{align*}
governing the isomonodromy deformation \cite{A-K}. 
By Theorem \ref{thm2.2}, from \eqref{1.1} it follows that
$$
-\frac{y'(x)}{(y(x)-1)^2}=\frac{A_{\phi}^{1/2}}8 
\Bigl(2\,\mathrm{sn}'(\tfrac 12 (x-x_0))(1+h'(x))+
\mathrm{sn}''(\tfrac 12(x-x_0))h(x)\Bigr) +O(x^{-2})
$$
and $2(y-1)^{-1}=A^{1/2}_{\phi}(\mathrm{sn}(\tfrac 12(x-x_0))
+\tfrac 12\mathrm{sn}'(\tfrac 12(x-x_0))h(x))-1$
in $S(\phi,t_{\infty},\kappa_0,\delta_0)$,
where $h(x)$, $h'(x) \ll x^{-1}$, 
$\mathrm{sn}\,z=\mathrm{sn}(z,A^{1/2}_{\phi})$ 
and $\mathrm{sn}'z=\frac d{dz}\mathrm{sn}\,z.$ Then we have
\begin{align*}
\z(x) =& \frac x8\bigl(A^{1/2}_{\phi}\mathrm{sn}'(\tfrac 12(x-x_0))+
A_{\phi}\mathrm{sn}^2(\tfrac 12(x-x_0))-1\bigr)
\\
&+ \frac {xh(x)}{16}\bigl(A^{1/2}_{\phi}\mathrm{sn}''(\tfrac 12(x-x_0))+
2A_{\phi}\mathrm{sn}(\tfrac 12(x-x_0))\mathrm{sn}'(\tfrac 12(x-x_0))\bigr)
\\
&+ \frac {xh'(x)}{8}A^{1/2}_{\phi}\mathrm{sn}'(\tfrac 12(x-x_0))
+ \frac {\theta_0+\theta_1}{4}A^{1/2}_{\phi}\mathrm{sn}(\tfrac 12(x-x_0))
-\frac {2\theta_0+\theta_{\infty}}4+O(x^{-1}).
\end{align*}
\end{rem}
Recall the correction function $B_{\phi}(t)$ such that $a_{\phi}=A_{\phi}+
t^{-1}B_{\phi}(t),$ where
\begin{align*}
a_{\phi}=& 1-\frac{4(e^{-2i \phi}(y^*)^2-y^2)}{y(y-1)^2} +4e^{-i\phi}
(\theta_0+\theta_1)\frac{y+1}{y-1} t^{-1}
\\
& + e^{-2i\phi} \frac{(y-1)}{y} ((\theta_0-\theta_1+\theta_{\infty})^2y
-(\theta_0-\theta_1-\theta_{\infty})^2) t^{-2}
\end{align*} 
with $x=e^{i\phi}t$ \cite[(3.5)]{S1}. In particular, $a_{\phi, \mathrm{Lag}}:
=a_{\phi}|_{y^*=dy/dt}$ is the Lagrangian of $y=y(e^{i\phi}t).$ In this case,
let $b(x)$ be such that
$$
a_{\phi,\mathrm{Lag}}=A_{\phi}+\frac{b(x)}x = A_{\phi}+\frac{e^{-i\phi}
b(e^{i\phi}t)}{t}.
$$
If $y^*=dy/dt$, then $b(x)=e^{i\phi}B_{\phi}(t).$ (In \cite{S}, $a_{\phi}$ and
$B_{\phi}(t)$ are defined under the condition $y^*=dy/dt.$)
\begin{thm}\label{thm2.3}
Under the same suppositions as in Theorems $\ref{thm2.1}$ and $\ref{thm2.2}$,
\begin{align*}
b(x)-b_0(x)= b'_0(x)h(x) &
- 4((\theta_0-\theta_1)^2 +\theta_{\infty}^2)x^{-1}
\\
& -\int^x_{\infty} 
(A_{\phi}-\psi_0^2)F_1(\psi_0,b_0)^2\frac{d\xi}{\xi^2} +O(x^{-2}),
\end{align*}
in which $b'_0(x)=4\psi_0'-2(A_{\phi}-\psi_0^2)$, and
$$
\int^x_{\infty}(A_{\phi}-\psi_0^2)F_1(\psi_0,b_0)^2\frac{d\xi}{\xi^2} \ll x^{-1}
$$
as $x\to \infty$ through $\check{S}_{\mathrm{cut}}
(\phi,t_{\infty},\kappa_0,\delta_0)$, and $b(x)-b_0(x) \ll x^{-1}$ in 
$S(\phi, t_{\infty},\kappa_0,\delta_0).$
\end{thm}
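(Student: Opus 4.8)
The plan is to derive Theorem \ref{thm2.3} as a consequence of the same machinery used for Theorem \ref{thm2.2}, exploiting the fact that $b(x)$ and $h(x)$ are governed by the integral equations set up in Section \ref{sc3}. First I would express $a_{\phi,\mathrm{Lag}}$ in terms of $(y(x)+1)(y(x)-1)^{-1}=\psi_0(x+h(x))$ and its $x$-derivative. Since $y^*=dy/dt$ corresponds to a derivative along $x=e^{i\phi}t$, differentiating the elliptic representation \eqref{1.1} gives $y'$ in terms of $\psi_0'(x+h(x))(1+h'(x))$, and substituting into the formula for $a_{\phi}$ (with $c_{\theta}$-type lower-order terms) yields $b(x)=xa_{\phi,\mathrm{Lag}}-xA_{\phi}$ as an explicit function of $\psi_0(x+h(x))$, $\psi_0'(x+h(x))$, $h(x)$, $h'(x)$, and the parameters $\theta_j$, plus $O(x^{-1})$ corrections coming from the $t^{-1}$ and $t^{-2}$ terms in $a_{\phi}$.

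Next I would Taylor-expand everything around $x$ rather than $x+h(x)$, using $h(x)\ll x^{-1}$ from Theorem \ref{thm2.1} and the already-established asymptotics of $h(x)$ from Theorem \ref{thm2.2}. The key algebraic identity to exploit is $b_0'(x)=4\psi_0'-2(A_{\phi}-\psi_0^2)$, which should drop out of differentiating the definition of $b_0(x)$ together with the ODE $\psi_0'^2=(1-\psi_0^2)(A_\phi-\psi_0^2)/4$ (equivalently the defining relation $v_z^2=(1-v^2)(1-k^2v^2)$ rescaled) — indeed $b_0(x)$ is built precisely so that the $O(1)$ part of $b(x)$ equals $b_0(x)$ and the linear-in-$h$ correction is $b_0'(x)h(x)$. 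The constant term $-4((\theta_0-\theta_1)^2+\theta_\infty^2)x^{-1}$ comes from the explicit $t^{-2}$ piece in the formula for $a_\phi$ evaluated on the leading elliptic profile, and the remaining integral term $-\int_\infty^x(A_\phi-\psi_0^2)F_1(\psi_0,b_0)^2\,d\xi/\xi^2$ arises from the quadratic-in-$h'$ and cross terms, recognizing $h'(\xi)$ at leading order as $-\xi^{-1}F_1(\psi_0,b_0)$ (this identification is exactly what the integral equations of Section \ref{sc3} provide, since $h=\Delta/2$ solves an equation whose right side is $F_1(\psi_0,b_0)/\xi$ to leading order).

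The size estimate $\int_\infty^x(A_\phi-\psi_0^2)F_1(\psi_0,b_0)^2\,d\xi/\xi^2\ll x^{-1}$ in $\check S_{\mathrm{cut}}(\phi,t_\infty,\kappa_0,\delta_0)$ I would obtain exactly as the analogous bounds in Theorem \ref{thm2.2}: the integrand is $O(\xi^{-2})$ uniformly away from the excluded discs (the factor $A_\phi-\psi_0^2$ cancels one of the two poles of $F_1^2$, so the remaining singularities are simple poles at $\mathcal{Q}$, handled by the cut structure), and integrating an $O(\xi^{-2})$ function along a contour to $\infty$ of bounded width in the strip gives $O(x^{-1})$. The passage from $\check S_{\mathrm{cut}}$ back to $b(x)-b_0(x)\ll x^{-1}$ on all of $S(\phi,t_\infty,\kappa_0,\delta_0)$ follows from Remark \ref{rem2.0}: $b(x)-b_0(x)$ is single-valued (being a ratio of monodromy-invariant quantities), and the local behavior near each $\sigma\in\mathcal{Q}$ is controlled by the explicit main term $b_0'(x)h(x)$ together with boundedness of $b_0'(x)$ and $h(x)$ there.

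The main obstacle I expect is the bookkeeping in the second step: correctly tracking which terms in the expansion of $a_{\phi,\mathrm{Lag}}$ contribute at order $x^{-1}$ versus $O(x^{-2})$, and in particular verifying that the $O(h^2)$, $O(hh')$, and $O(h'^2)$ contributions assemble into precisely $b_0'(x)h(x)$ plus the stated integral — this requires using the refined form of $h(x)$ from Theorem \ref{thm2.2}, not merely $h\ll x^{-1}$, because a naive bound would only give $O(x^{-2})$ for some terms that in fact must be resolved to extract the $\int F_1^2$ piece. Once the algebra is organized so that the derivative relation $b_0'=4\psi_0'-2(A_\phi-\psi_0^2)$ and the leading identity $\xi h'\sim -F_1(\psi_0,b_0)$ are both in hand, the rest is routine estimation parallel to Theorem \ref{thm2.2}.
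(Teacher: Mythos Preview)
Your plan has a genuine gap: computing $b(x)$ directly from the Lagrangian formula for $a_{\phi,\mathrm{Lag}}$ cannot produce the claimed expression, because that formula is precisely equation \eqref{3.1}, which is also the source of the relation $h'\sim -F_1(\psi_0,b)x^{-1}$ (equation \eqref{3.41}). Concretely, solving \eqref{3.1} for $b$ gives
\[
b = 4(\theta_0+\theta_1)\psi + x(2h'+(h')^2)(A_{\phi}-\psi^2) + 2(A_{\phi}-\psi^2)F_2(\psi)x^{-1},
\]
and when you substitute $xh' = -F_1(\psi,b) + (F_2(\psi)-\tfrac12 F_1(\psi,b)^2)x^{-1}+O(x^{-2})$ from \eqref{3.41}, the leading pieces collapse to the tautology $b=b+O(x^{-2})$. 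All the non-trivial information about $b-b_0$ lives in the \emph{dynamical} equation \eqref{3.2}, not in the definition of the Lagrangian.

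This is why the paper instead subtracts \eqref{3.4} from \eqref{3.2} and \emph{integrates} $(b-b_0)'$ along a contour to $\infty$, obtaining the second equality of Proposition~\ref{prop3.2}:
\[
\chi - b_0'h = \int_{\infty}^{x}(A_{\phi}-\psi_0^2)\bigl(2F_2(\psi_0)-F_1(\psi_0,b_0)^2\bigr)\frac{d\xi}{\xi^2} + O(x^{-2\mu}).
\]
The integral here does not arise from ``quadratic-in-$h'$'' pieces of a local expansion (those would be local terms, not integrals), but from integrating the evolution equation. The constant $-4((\theta_0-\theta_1)^2+\theta_\infty^2)x^{-1}$ then comes from evaluating $\int_{\infty}^{x}2(A_{\phi}-\psi_0^2)F_2(\psi_0)\,\xi^{-2}d\xi$ via Lemma~\ref{lem4.2}, not from the pointwise $t^{-2}$ piece of $a_\phi$ (which contributes a $\psi_0$-dependent local term, not a constant). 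After that, the bootstrap $\mu\mapsto 2\mu$ parallel to the derivation of Theorems~\ref{thm2.1} and \ref{thm2.2} finishes the job, and the estimate on $S(\phi,t_\infty,\kappa_0,\delta_0)$ is exactly Remark~\ref{rem4.1}. Your final two paragraphs (the $O(x^{-1})$ bound on the integral and the extension to the full strip) are fine in spirit, but the derivation of the formula itself needs to go through \eqref{3.2} and Proposition~\ref{prop3.2}.
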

\begin{rem}\label{rem2.1}
As calculated in Section \ref{sc4} the integrals above are written in the
form
\begin{align*}
\int^x_{\infty} F_1(\psi_0,& b_0)  \frac{d\xi}{\xi} =
2(\theta_0+\theta_1)\int^x_{\infty}\frac{\psi_0}{A_{\phi}-\psi_0^2} \frac{d\xi}
{\xi}
\\
& -\frac {b_0(x_0)}2 \int^x_{\infty} \frac 1{A_{\phi}-\psi_0^2} \frac{d\xi}
{\xi} 
 +\frac 4{\Omega_{\mathbf{a}}} \int^x_{\infty} \frac{\mathfrak{b}}
{A_{\phi}-\psi_0^2}\frac{d\xi}{\xi},
\\
\int^x_{\infty} F_1(\psi_0,& b_0)^2 \frac{d\xi}{\xi^2} =
\frac{16(\theta_0+\theta_1)^2A_{\phi} +b_0(x_0)^2}{12A_{\phi}
(A_{\phi}-1)}x^{-1}+\frac{4b_0(x_0)}{3A_{\phi}(A_{\phi}-1)\Omega_{\mathbf{a}}}
\int^x_{\infty}\mathfrak{b}\frac{d\xi}{\xi^2}
\\
& +\frac{16(\theta_0+\theta_1)}{\Omega_{\mathbf{a}}}\int^x_{\infty}\frac{
\mathfrak{b}\psi_0}{(A_{\phi}-\psi_0^2)^2}\frac{d\xi}{\xi^2}
+\frac{16}{\Omega_{\mathbf{a}}^2} \int^x_{\infty} \frac{\mathfrak{b}^2}
{(A_{\phi}-\psi_0^2)^2}\frac{d\xi}{\xi^2} +O(x^{-2}),
\\
\int^x_{\infty} (A_{\phi}- & \psi_0^2)  F_1(\psi_0,b_0)^2\frac{d\xi}{\xi^2} =
4(\theta_0+\theta_1)^2 x^{-1} 
\\
& +\frac{16(\theta_0+\theta_1)}{\Omega_{\mathbf{a}}}\int^x_{\infty}\frac{
\mathfrak{b}\psi_0}{A_{\phi}-\psi_0^2}\frac{d\xi}{\xi^2}
+\frac{16}{\Omega_{\mathbf{a}}^2} \int^x_{\infty} \frac{\mathfrak{b}^2}
{A_{\phi}-\psi_0^2}\frac{d\xi}{\xi^2} +O(x^{-2}),
\end{align*}
in which each integral on the right-hand 
sides is $O(x^{-1})$, and immediately yield detailed expressions of $h(x)$ 
and $b(x)-b_0(x)$ (see Section \ref{ssc4.3}). 
\end{rem}
\section{System of integral equations}\label{sc3}
To prove our theorems, we recall the following facts \cite[Section 6]{S}.
\par
(1) For the solution $y(x)$ of (P$_{\mathrm{V}}$), $(\psi(x),b(x))$ with
$\psi(x)=(y(x)+1)(y(x)-1)^{-1}$ solves a system of equations
\begin{align}\notag
4(\psi')^2 =& (1-\psi^2)(A_{\phi}-\psi^2) -(1-\psi^2)(4(\theta_0+\theta_1)\psi
-b)x^{-1}
\\
\label{3.1}
& +4(2(\theta_0-\theta_1)\theta_{\infty} \psi +(\theta_0-\theta_1)^2+
\theta_{\infty}^2) x^{-2},
\\
\label{3.2}
b'=& -2(A_{\phi}-\psi^2)+ 4\psi' + (4(\theta_0+\theta_1)\psi -b)x^{-1},
\end{align}
where $b=b(x)$ is as defined in Section \ref{sc2} by using the Lagrangian
$a_{\phi, \mathrm{Lag}}.$
\par
(2) $\psi_0(x)$ and $b_0(x)$ are bounded in $S(\phi,t_{\infty},\kappa_0,\delta
_0)$ and fulfil
\begin{align}\label{3.3}
4(\psi'_0)^2 &= (1-\psi_0^2)(A_{\phi}-\psi_0^2),
\\
\label{3.4}
b'_0 &= -2(A_{\phi}-\psi_0^2) +4\psi_0'
\end{align}
\cite{S}, which at least formally approximates system \eqref{3.1}, \eqref{3.2}.
\begin{prop}\label{prop3.1}
Equations \eqref{3.1}, \eqref{3.2} admit a solution $(\psi(x), b(x))$ such that
$\psi(x)=\psi_0(x+h(x))$ with $h(x) \ll x^{-2/9+\varepsilon}$ and 
$b(x)-b_0(x) \ll x^{-2/9+\varepsilon}$ as $x \to \infty$ through $S(\phi,
t_{\infty},\kappa_0,\delta_0)$ for any $0<\varepsilon < 2/9.$
Furthermore $b_0(x)$ and $b(x)$ are bounded.
\end{prop}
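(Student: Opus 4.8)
The plan is to construct the solution $(\psi(x),b(x))$ of \eqref{3.1}--\eqref{3.2} as a perturbation of $(\psi_0,b_0)$ by setting $\psi(x)=\psi_0(x+h(x))$ and treating $h(x)$ together with $g(x):=b(x)-b_0(x)$ as unknowns to be found by a fixed-point argument. First I would substitute $\psi=\psi_0(x+h)$ into \eqref{3.1} and expand. Since $\psi_0$ satisfies \eqref{3.3}, the leading terms $(1-\psi^2)(A_\phi-\psi^2)$ and $4(\psi')^2$ nearly cancel against their $h$-shifted analogues; writing $\psi'=\psi_0'(x+h)(1+h')$ and using \eqref{3.3} at the shifted point, the equation \eqref{3.1} should collapse to an expression in which $h'$ is the dominant unknown. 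Concretely, differentiating $\psi_0(x+h)$ and keeping track that $4\psi_0'(x+h)^2=(1-\psi_0(x+h)^2)(A_\phi-\psi_0(x+h)^2)$, equation \eqref{3.1} becomes, after dividing by the non-vanishing factor $(1-\psi^2)$ (valid in $\check S$, away from the zeros of $1-\psi_0^2$), an equation of the shape
\[
8\psi_0'(x+h)^2 h' + 4\psi_0'(x+h)^2 (h')^2
= (4(\theta_0+\theta_1)\psi-b)x^{-1} + (\text{$O(x^{-2})$ terms}),
\]
which one solves for $h'$ in the form $h'=F_1(\psi_0(x+h),b)x^{-1}+O(x^{-2})$ with $F_1$ as in Theorem \ref{thm2.2}; similarly \eqref{3.2} minus \eqref{3.4} gives $g'=4(\psi_0'(x+h)-\psi_0'(x))+(4(\theta_0+\theta_1)\psi-b)x^{-1}$, and the first difference is itself $O(h)$. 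Integrating from $\infty$ along a contour in $\check S_{\mathrm{cut}}$ then yields a coupled system of integral equations
\[
h(x)=-\int^x_\infty \Bigl(F_1(\psi_0(\xi+h),b)\xi^{-1}+O(\xi^{-2})\Bigr)d\xi,
\qquad
g(x)=-\int^x_\infty\bigl(\cdots\bigr)d\xi .
\]

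Next I would set up the contraction. The natural space is pairs $(h,g)$ of functions bounded in $S(\phi,t_\infty,\kappa_0,\delta_0)$ with norm $\sup |x|\,(|h|+|g|)$ (or $\sup |x|^{1-\varepsilon}$ if one only wants the weaker bound first), and the map $\mathcal T$ sends $(h,g)$ to the right-hand sides of the integral equations above. The key estimate is that $\int^x_\infty \psi_0(\xi)^m (A_\phi-\psi_0(\xi)^2)^{-n}\xi^{-1}d\xi\ll x^{-1}$ uniformly in $\check S_{\mathrm{cut}}$ — this is where the Boutroux condition on $A_\phi$ enters, since it makes $\psi_0$ genuinely oscillatory (elliptic) along the strip rather than exponentially growing, so the oscillatory integral decays like the size of the $\xi^{-1}$ factor; near the poles $\sigma\in\mathcal Q$ the $\delta_0$-discs and the cuts $l(\sigma)$ are precisely what keeps $A_\phi-\psi_0^2$ bounded away from $0$ on the contour. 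Given such bounds, $\mathcal T$ maps a ball of radius $C/|x|$ into itself for $t_\infty$ large, and the Lipschitz constant is $O(t_\infty^{-1})$ because every term carries an extra $\xi^{-1}$; hence $\mathcal T$ is a contraction and has a unique fixed point, giving existence of $(\psi,b)$ with $h\ll x^{-1}$ and $g=b-b_0\ll x^{-1}$, which in particular is $\ll x^{-2/9+\varepsilon}$ as claimed, and boundedness of $b$ follows from boundedness of $b_0$ and $g$.

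The main obstacle I anticipate is not the contraction mechanics but the \emph{uniform} control of the oscillatory integrals near the excluded discs and along the cuts: one must show that the constants in $\int^x_\infty(\cdots)\xi^{-1}d\xi\ll x^{-1}$ do not blow up as the contour passes close to a pole $\sigma\in\mathcal Q$ at distance $\delta_0$, and that the choice of contour in $\check S_{\mathrm{cut}}$ (with the local arc modifications described in \textbf{(3)}) makes each integral single-valued and uniformly bounded. This is essentially an analysis of $\vartheta'/\vartheta$ and of $\sn$ near their special values, using the product/quasi-periodicity formulas for $\vartheta$; the periodicity $\psi_0(x+2\Omega_{\mathbf a})=\psi_0(x)$ etc. reduces it to a compact fundamental region, after which a standard partial-summation / stationary-phase-free estimate (integration by parts against $d\xi/\xi$) gives the $x^{-1}$ decay. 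A secondary technical point is justifying the expansion of \eqref{3.1} to the stated order — i.e. that the implicit equation $4\psi_0'(x+h)^2(1+h')^2=(1-\psi_0(x+h)^2)(A_\phi-\psi_0(x+h)^2)+O(x^{-1})$ can be solved for $h'$ with the error genuinely $O(x^{-2})$ once $h\ll x^{-1}$ is known — but this is routine once the a priori bound $h\ll x^{-2/9+\varepsilon}$ from \cite[Theorem 2.1]{S,S1} (quoted before \eqref{1.1}) is in hand to bootstrap.
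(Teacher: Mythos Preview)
Your proposal targets the wrong statement. The paper's proof of Proposition~\ref{prop3.1} is not a self-contained argument but a citation: the bounds $h\ll x^{-2/9+\varepsilon}$ and $b-b_0\ll x^{-2/9+\varepsilon}$ for the \emph{specific} transcendent $y(x)$ attached to $(M^0,M^1)$ are imported from \cite{S,S1}, where they are obtained via Kitaev's isomonodromy justification scheme \cite{K}; the exponent $2/9$ comes from that WKB analysis (the choice $\delta=2/9-\varepsilon$ compatible with the annulus $\mathcal{A}_\varepsilon$ of \cite[p.~64]{S}). Proposition~\ref{prop3.1} is the \emph{input} to the present paper; the paper's own work is the bootstrap from $x^{-2/9+\varepsilon}$ to $x^{-1}$ (Proposition~\ref{prop4.1} iterated). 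Your contraction scheme, together with the oscillatory integral estimate you single out, is really a proposal for that bootstrap --- i.e.\ for Theorems~\ref{thm2.1}--\ref{thm2.2} --- not for Proposition~\ref{prop3.1}; indeed you yourself invoke the a~priori bound from \cite{S,S1} at the end, and that \emph{is} Proposition~\ref{prop3.1}.

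Even as an alternative route to the main theorems, two gaps remain. First, a fixed point of your map $\mathcal{T}$ is merely \emph{some} solution of \eqref{3.1}--\eqref{3.2} with $h\to 0$; nothing in the contraction identifies it with the Painlev\'e solution for the prescribed $(M^0,M^1)$, nor explains why the phase equals the particular $x_0$ of \textbf{(3)} written in terms of the $m^0_{ij},m^1_{ij}$. That identification is precisely what \cite{S,S1} supply via isomonodromy and cannot be extracted from a local ODE fixed-point argument. Second, your claim that the Lipschitz constant of $\mathcal{T}$ is $O(t_\infty^{-1})$ is not justified: the linearised term $(F_1)_\psi(\psi_0,b_0)\,\psi_0'\,h\,\xi^{-1}$ in \eqref{3.5}, integrated against a generic perturbation of size $O(\xi^{-1})$, gives $\int^x_\infty O(\xi^{-2})\,d\xi=O(x^{-1})$ with an $O(1)$ constant, so $\mathcal{T}$ need not contract in the $\sup|x|\,|h|$ norm. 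The paper handles exactly this term (the integral $\mathcal{I}_0$ following \eqref{3.82}) by integrating by parts \emph{using the equation} $h'=-F_1(\psi_0,b_0)\xi^{-1}+O(\xi^{-1-\mu})$ --- a bootstrap manoeuvre presupposing \eqref{3.6}, not a contraction estimate.
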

\begin{proof}
As in \cite[Section 5]{S1}, the correction function $b^*(x):
=e^{i\phi}B_{\phi}(t)$ admits the asymptotic expression $b^*(x)-b_0(x) \ll
x^{-2/9+\varepsilon},$ which follows from \cite[(5.5)]{S1} with $\delta
=2/9-\varepsilon.$ (Here we note that, in the argument of \cite[Sectios 4 and 5]
{S1} as well, $\delta$ is so chosen in accordance with the annulus $\mathcal{A}
_{\varepsilon}$ in \cite[p.~64]{S}.) By the justification scheme \cite[Section 5]
{S1} with \cite{K}, for $b(x)$ corresponding to the Lagrangian $a_{\phi,
\mathrm{Lag}}$ as well, the estimate $b(x)-b_0(x)\ll x^{-2/9+\varepsilon}$
remains valid. 
\end{proof}
From Proposition \ref{prop3.1} with \eqref{3.3}, it follows that
$
2\psi'(x)=2(1+h'(x)) \psi'_0(x+h(x)) =(1+h')\sqrt{(1-\psi_0(x+h)^2)
(A_{\phi}-\psi_0(x+h)^2)} =(1+h')\sqrt{(1-\psi^2)(A_{\phi}-\psi^2)}.
$
Then \eqref{3.1} becomes $(1+h')^2=1-2F_1(\psi,b)x^{-1}+2F_2(\psi)x^{-2}$,
which yields
\begin{equation}\label{3.41}
h'=-F_1(\psi, b)x^{-1} +(F_2(\psi)-\tfrac 12 F_1(\psi,b)^2)x^{-2} +O(x^{-3})
\end{equation}
in $\check{S}(\phi,t_{\infty},\kappa_0,\delta_0),$ where
$$
F_1(\psi,b)=\frac{4(\theta_0+\theta_1)\psi-b}{2(A_{\phi}-\psi^2)}, \quad
F_2(\psi)=\frac{2(2(\theta_0-\theta_1)\theta_{\infty}\psi +(\theta_0-\theta_1)^2
+ \theta_{\infty}^2)}{(1-\psi^2)(A_{\phi}-\psi^2)}.
$$
Using $\psi=\psi_0+\psi'_0h+O(h^2),$ we have
\begin{align}\notag
h'=& -F_1(\psi_0,b)x^{-1} +(F_2(\psi_0)-\tfrac 12 F_1(\psi_0,b)^2)x^{-2}
\\
\label{3.5}
& -(F_1)_{\psi}(\psi_0,b)\psi'_0 hx^{-1} +O(x^{-1}(|x^{-1}|+|h|)^2). 
\end{align}
In what follows we suppose that, for a positive number $\mu \le 1$,
\begin{equation}\label{3.6}
h(x) \ll x^{-\mu}  \phantom{---------}
\end{equation}
in $\check{S}_{\mathrm{cut}}(\phi,t_{\infty},\kappa_0,\delta_0)$. 
By Proposition \ref{prop3.1},
estimate \eqref{3.6} is true if, say, $\mu=1/9.$ 
\par
Let $\{x_{\nu}\} \subset \check{S}_{\mathrm{cut}}(\phi,t_{\infty}, 
\kappa_0,\delta_0)$ be
a given sequence such that $|x_1|< \cdots < |x_{\nu}| < \cdots,$ $|x_{\nu}|
\to \infty$. Then, by \eqref{3.2} and \eqref{3.4}
\begin{align*}
& b(x)-b(x_{\nu}) = \int^x_{x_{\nu}} (4\psi'-2(A_{\phi}-\psi^2)) d\xi
+\int^x_{x_{\nu}} (4(\theta_0+\theta_1)\psi -b)\frac{d\xi}{\xi},
\\
& b_0(x)-b_0(x_{\nu}) = \int^x_{x_{\nu}} (4\psi'_0-2(A_{\phi}-\psi_0^2)) d\xi,
\end{align*}
from which we derive, for $x\in \check{S}_{\mathrm{cut}}(\phi,t_{\infty},
\kappa_0, \delta_0)$ with $|x|<|x_{\nu}|$,
\begin{align}\notag
b(x)-b_0(x) - &(b(x_{\nu})-b_0(x_{\nu}))= 4(\psi(x)-\psi_0(x) -(\psi(x_{\nu})
-\psi_0(x_{\nu})))
\\
\label{3.7}
& +2\int^x_{x_{\nu}} (\psi^2-\psi_0^2) d\xi +2\int^x_{x_{\nu}} (A_{\phi}-\psi^2)
F_1(\psi,b)\frac{d\xi}{\xi}.
\end{align}
In this equality, by \eqref{3.6} and Proposition \ref{prop3.1},
\begin{align*}
&\psi(x)-\psi_0(x)-(\psi(x_{\nu})-\psi_0(x_{\nu})) \ll |h(x)|+|h(x_{\nu})|
\ll |x^{-\mu}|+|x_{\nu}^{-\mu}|,
\\
& b(x_{\nu})-b_0(x_{\nu}) \ll x_{\nu}^{-2/9+\varepsilon}.
\end{align*}
Furthermore,
\begin{align*}
2\int^x_{x_{\nu}} (\psi^2-\psi_0^2)d\xi &
= 2\int^x_{x_{\nu}} \Bigl((\psi_0^2)'h +\frac{(\psi_0^2)''}2 h^2+ \cdots
+\frac{(\psi_0^2)^{(p)}}{p!} h^p +O(h^{p+1}) \Bigr)d\xi
\\
& = -2\int^x_{x_{\nu}} \Bigl(\psi_0^2 +(\psi_0^2)' h+ \cdots
+\frac{(\psi_0^2)^{(p-1)}}{(p-1)!} h^{p-1} \Bigr)h'd\xi +O(|x^{-\mu}|+|x_{\nu}
^{-\mu}|),
\end{align*}
if $-(p+1)\mu+1 \le -\mu,$ i.e. $-p\mu+1\le 0$; and by \eqref{3.41} and 
\eqref{3.6},
\begin{align*}
2\int^x_{x_{\nu}} (A_{\phi}-\psi^2)F_1(\psi,b)\frac{d\xi}{\xi} 
=& -2\int^x_{x_{\nu}}(A_{\phi}-\psi^2) (h' +O(\xi^{-2}) )d\xi
\\
 =& -2\int^x_{x_{\nu}} (A_{\phi}-\psi^2) h' d\xi  + O(|x^{-1}|+|x_{\nu}^{-1}|),
\end{align*}
where
\begin{align*}
&-2 \int^x_{x_{\nu}} (A_{\phi}-\psi^2)h' d\xi 
\\
=& 2\int^x_{x_{\nu}} \Bigl(\psi_0^2 +(\psi_0^2)' h+ \cdots
+\frac{(\psi_0^2)^{(p-1)}}{(p-1)!} h^{p-1} \Bigr)h' d\xi
+ O(|x^{-\mu}|+|x_{\nu}^{-\mu}|),
\end{align*}
since $h' \ll \xi^{-1}$ by \eqref{3.5} and \eqref{3.6}. 
Insert these quantities with $p$ such
that $-p\mu+1 \le 0$ into \eqref{3.7}. Under the passage to the limit $x_{\nu}
\to \infty$, we arrive at the estimate
\begin{equation}\label{3.81}
b(x)-b_0(x) \ll x^{-\mu}
\end{equation}
in $\check{S}_{\mathrm{cut}}(\phi,t_{\infty},\kappa_0,\delta_0).$ 
Then equation \eqref{3.5} is written in the form
\begin{align}\notag
h'&= -F_1(\psi_0,b)x^{-1} +(F_2(\psi_0)-\tfrac 12 F_1(\psi_0,b_0)^2)x^{-2}
-(F_1)_{\psi}(\psi_0,b_0)\psi_0' hx^{-1} +O(x^{-1-2\mu})
\\
\label{3.82}
&= -F_1(\psi_0,b_0) x^{-1} +O(x^{-1-\mu}).
\end{align}
For any sequence $\{x_{\nu}\} \subset \check{S}_{\mathrm{cut}}
(\phi,t_{\infty},\kappa_0,\delta_0)$, integration of this yields
\begin{align*}
&h(x)-h(x_{\nu})
\\
=&-\int^x_{x_{\nu}} F_1(\psi_0,b)\frac{d\xi}{\xi} +\int^x_{x_{\nu}}
(F_2(\psi_0)-\tfrac 12 F_1(\psi_0,b_0)^2)\frac{d\xi}{\xi^2} -\mathcal{I}_0
+O(|x^{-2\mu}|+|x_{\nu}^{-2\mu}|)
\end{align*}
with
\begin{align*}
\mathcal{I}_0 &= \int^x_{x_{\nu}} (F_1)_{\psi}(\psi_0,b_0)\psi_0' h \frac{d\xi}
{\xi}
\\
&= \int^x_{x_{\nu}} \Bigl(F_1(\psi_0,0)_{\xi} -  \Bigl(\frac 1{2(A_{\phi}
-\psi_0^2)} \Bigr)_{\! \xi} b_0 \Bigr) h\frac{d\xi}{\xi}
\\
&= \int^x_{x_{\nu}}  \Bigl(F_1(\psi_0,0)F_1(\psi_0,b_0) -\frac{b_0 F_1(\psi_0,
b_0) -b_0'h\xi}{2(A_{\phi}-\psi_0^2)}  \Bigr) \frac{d\xi}{\xi^2}
+O(|x^{-1-\mu}|+|x_{\nu}^{-1-\mu}|)
\\
&= \int^x_{x_{\nu}}  F_1(\psi_0,b_0)^2 \frac{d\xi}{\xi^2} 
+\frac 12 \int^x_{x_{\nu}} \frac{b_0'h}{A_{\phi}-\psi_0^2}  \frac{d\xi}{\xi}
+ O(|x^{-1-\mu}|+|x_{\nu}^{-1-\mu}|),
\end{align*}
where the third line is due to integration by parts.
Hence we have, in $\check{S}_{\mathrm{cut}}(\phi,t_{\infty},\kappa_0,\delta_0)$,
\begin{align}\notag
h(x)=& -\int^x_{\infty}F_1(\psi_0,b)\frac{d\xi}{\xi}
\\
\label{3.9}
& +\int^x_{\infty}
\Bigl(F_2(\psi_0)-\frac 32 F_1(\psi_0,b_0)^2 \Bigr) \frac{d\xi}{\xi^2}
-\frac 12 \int^x_{\infty}\frac{b_0' h}{A_{\phi}-\psi_0^2}\frac{d\xi}{\xi}
+O(x^{-2\mu}),
\end{align}
in which the convergence of $\int^x_{\infty} F_1(\psi_0,b)\xi^{-1}d\xi$ is
guaranteed by the absolute convergence of the remaining two integrals.
\par
By \eqref{3.1} and \eqref{3.2},
$$
b'=-(A_{\phi}-\psi^2)+4\psi' -\frac{4(\psi')^2}{1-\psi^2}+2(A_{\phi}-\psi^2)
F_2(\psi)x^{-2}.
$$
From this combined with \eqref{3.4} and $2\psi'=(1+h')\sqrt{(1-\psi^2)(A_{\phi}
-\psi^2)},$ it follows that
$$
(b-b_0)'=2(\psi^2-\psi_0^2)+4(\psi-\psi_0)' -2h'(A_{\phi}-\psi^2)(1+h'/2)
+2(A_{\phi}-\psi^2)F_2(\psi)x^{-2},
$$
and then, for any $\{x_{\nu}\} \subset \check{S}_{\mathrm{cut}}
(\phi,t_{\infty},\kappa_0, \delta_0)$, $\chi:=b-b_0$ satisfies
\begin{align*}
\chi(x)-\chi(x_{\nu}) =& 4(\psi-\psi_0)-4(\psi(x_{\nu})-\psi_0(x_{\nu}))
 +2\int^x_{x_{\nu}} (\psi^2-\psi_0^2+h'\psi^2)d\xi
\\
 &-2A_{\phi}(h(x)-h(x_{\nu})) 
-\int^x_{x_{\nu}}(A_{\phi}-\psi^2)((h')^2-2F_2(\psi)\xi^{-2})d\xi.
\end{align*}
Observing that
\begin{align*}
& 2\int^x_{x_{\nu}}(\psi^2-\psi_0^2+h'\psi^2) d\xi
\\
=& 2\int^x_{x_{\nu}} \Bigl( (\psi_0^2)'h +\cdots + \frac{(\psi_0^2)^{(p)}}{p!}
h^p
\\
&\phantom{---} 
+ h' \Bigl( \psi_0^2 +\cdots + \frac{(\psi_0^2)^{(p-1)}}{(p-1)!}h^{p-1}
\Bigr) + O(|h^{p+1}|+|h^ph'|) \Bigr)d\xi
\\
=& 2\int^x_{x_{\nu}} \Bigl( 
 \Bigl( \psi_0^2 h +\cdots + \frac{(\psi_0^2)^{(p-1)}}{p!}h^{p} \Bigr)_{\! \xi}
 + O(|\xi^{-\mu(p+1)}|+|\xi^{-\mu p-1}|) \Bigr)d\xi
\\
=& 2\psi_0^2h +O(|h(x_{\nu})|+|x^{-2\mu}|+|x_{\nu}^{-2\mu}|),
\end{align*}
if $\mu(p-1)\ge 1,$ and that $\psi(x_{\nu}) -\psi_0(x_{\nu}) \ll h(x_{\nu})\psi
_0'(x_{\nu}),$ and using \eqref{3.82} and \eqref{3.4}, we have
\begin{align*}
\chi =& (4\psi'_0-2(A_{\phi}-\psi_0^2))h +\int^x_{\infty}(A_{\phi}-\psi_0^2)
(2F_2(\psi_0)-F_1(\psi_0,b_0)^2) \frac{d\xi}{\xi^2} +O(x^{-2\mu})
\\
 =& b'_0h +\int^x_{\infty}(A_{\phi}-\psi_0^2)
(2F_2(\psi_0)-F_1(\psi_0,b_0)^2) \frac{d\xi}{\xi^2} +O(x^{-2\mu}).
\end{align*}
Combining this with \eqref{3.81} and \eqref{3.9} we have the following.
\begin{prop}\label{prop3.2}
Under supposition \eqref{3.6} with $0<\mu \le 1,$ $h$ and $\chi=b-b_0$ 
satisfy
\begin{align*}
 &h= -\int^x_{\infty} F_1(\psi_0,b_0)\frac{d\xi}{\xi}
\\
&\phantom{--}  +\int^x_{\infty}
\Bigl(F_2(\psi_0)-\frac 32 F_1(\psi_0,b_0)^2 \Bigr)\frac{d\xi}{\xi^2}
+\frac 12 \int^x_{\infty} \frac{\chi-b_0'h}{A_{\phi}-\psi_0^2} \frac{d\xi}{\xi}
+O(x^{-2\mu}),
\\
&\chi-b'_0h= \int^x_{\infty} (A_{\phi}-\psi_0^2)(2F_2(\psi_0)-F_1(\psi_0,b_0)^2
)\frac{d\xi}{\xi^2} +O(x^{-2\mu})
\end{align*}
and $\chi \ll x^{-\mu}$
in $\check{S}_{\mathrm{cut}}(\phi,t_{\infty},\kappa_0,\delta_0)$, 
in which each integral converges. 
\end{prop}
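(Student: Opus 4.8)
The plan is to obtain the two displayed identities by assembling the relations already derived from the exact system \eqref{3.1}, \eqref{3.2}, using the a priori estimates of Proposition \ref{prop3.1} together with the standing hypothesis \eqref{3.6}. The starting point is that, by Proposition \ref{prop3.1}, the solution of \eqref{3.1}, \eqref{3.2} can be written $\psi(x)=\psi_0(x+h(x))$; substituting this into \eqref{3.1} and using $2\psi_0'(z)=\sqrt{(1-\psi_0^2)(A_\phi-\psi_0^2)}$ reduces \eqref{3.1} to the scalar relation $(1+h')^2=1-2F_1(\psi,b)x^{-1}+2F_2(\psi)x^{-2}$, hence to \eqref{3.41}, and then a Taylor expansion in $h$ about $\psi_0$ yields \eqref{3.5}. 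Under \eqref{3.6}, integrating \eqref{3.5} --- after collecting the polynomial-in-$h$ pieces under a total $\xi$-derivative and integrating by parts to a power $p$ with $\mu p\ge1$ so the tail is genuinely small --- gives the crude bound $\chi=b-b_0\ll x^{-\mu}$ of \eqref{3.81}, and reinserting this into \eqref{3.5} produces the sharpened form \eqref{3.82}.

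Next I would integrate \eqref{3.5}/\eqref{3.82} once more from a point $x_\nu\in\check{S}_{\mathrm{cut}}(\phi,t_\infty,\kappa_0,\delta_0)$, treating the term $(F_1)_\psi(\psi_0,b_0)\psi_0'h\,\xi^{-1}$ by integration by parts (which converts it into a multiple of $F_1(\psi_0,b_0)^2$ plus a $b_0'h/(A_\phi-\psi_0^2)$ contribution) and letting $x_\nu\to\infty$; this is \eqref{3.9}. In parallel, rewriting \eqref{3.2} with the aid of \eqref{3.1} as $b'=-(A_\phi-\psi^2)+4\psi'-4(\psi')^2(1-\psi^2)^{-1}+2(A_\phi-\psi^2)F_2(\psi)x^{-2}$ and subtracting \eqref{3.4} gives an equation for $\chi'$; integrating it, expanding $\psi^2$ and $h'\psi^2$ in powers of $h$, invoking \eqref{3.82} and a further integration by parts, I obtain $\chi-b_0'h=\int_\infty^x(A_\phi-\psi_0^2)(2F_2(\psi_0)-F_1(\psi_0,b_0)^2)\,\xi^{-2}\,d\xi+O(x^{-2\mu})$, the second identity of the proposition. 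Finally, to bring \eqref{3.9} into the stated shape I would use the linearity $F_1(\psi_0,b)=F_1(\psi_0,b_0)-\chi/(2(A_\phi-\psi_0^2))$, so that $-\int_\infty^x F_1(\psi_0,b)\,\xi^{-1}\,d\xi=-\int_\infty^x F_1(\psi_0,b_0)\,\xi^{-1}\,d\xi+\tfrac12\int_\infty^x\chi(A_\phi-\psi_0^2)^{-1}\xi^{-1}\,d\xi$, and then merge this with the $-\tfrac12\int_\infty^x b_0'h(A_\phi-\psi_0^2)^{-1}\xi^{-1}\,d\xi$ already present to get the combined term $\tfrac12\int_\infty^x(\chi-b_0'h)(A_\phi-\psi_0^2)^{-1}\xi^{-1}\,d\xi$.

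Convergence of all the integrals along a contour joining $x$ to $\infty$ inside $\check{S}_{\mathrm{cut}}(\phi,t_\infty,\kappa_0,\delta_0)$ is then checked by noting that $\psi_0$ and $b_0$ are bounded there while $A_\phi-\psi_0^2$ is bounded away from $0$ outside the excised discs around $\mathcal{Q}$ (which is precisely why the domain is $\check{S}_{\mathrm{cut}}$ and not $S$): the two $\xi^{-2}$ integrals are therefore absolutely convergent, and since $\chi\ll\xi^{-\mu}$ by \eqref{3.81} and $h\ll\xi^{-\mu}$ by \eqref{3.6}, the integrand of the remaining term is $\ll\xi^{-1-\mu}$, hence also absolutely convergent; single-valuedness of each integral follows from the topological specification of the contour in $\check{S}_{\mathrm{cut}}$ (cf. Remark \ref{rem2.0}).

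I expect the genuine work to lie not in any single identity but in the uniform bookkeeping of the integration-by-parts steps: one must expand the nonlinearities to an order $p$ chosen so that the discarded remainder is provably $O(x^{-\mu})$ (resp.\ $O(x^{-2\mu})$) throughout $\check{S}_{\mathrm{cut}}$, and must simultaneously show that every boundary term evaluated at $x_\nu$ vanishes as $|x_\nu|\to\infty$ --- both of which again rest on the boundedness of $\psi_0,b_0$ and on the uniform lower bound for $|A_\phi-\psi_0^2|$ on $\check{S}_{\mathrm{cut}}$.
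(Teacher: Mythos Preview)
Your proposal follows the paper's own argument essentially step for step: the reduction of \eqref{3.1} to \eqref{3.41} and \eqref{3.5}, the integration-by-parts treatment of $(F_1)_\psi(\psi_0,b_0)\psi_0'h\,\xi^{-1}$ leading to \eqref{3.9}, the derivation of the $\chi-b_0'h$ formula from $b'=-(A_\phi-\psi^2)+4\psi'-4(\psi')^2(1-\psi^2)^{-1}+2(A_\phi-\psi^2)F_2(\psi)x^{-2}$ minus \eqref{3.4}, and the final use of the linearity $F_1(\psi_0,b)=F_1(\psi_0,b_0)-\chi/(2(A_\phi-\psi_0^2))$ to merge terms --- all of this matches.

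One slip to correct: the crude bound $\chi\ll x^{-\mu}$ of \eqref{3.81} does \emph{not} come from ``integrating \eqref{3.5}'' as you write; \eqref{3.5} is an equation for $h'$ and says nothing directly about $\chi$. In the paper, \eqref{3.81} is obtained by subtracting the integrated forms of \eqref{3.2} and \eqref{3.4} to get \eqref{3.7}, and then applying the polynomial-in-$h$ expansion (to order $p$ with $p\mu\ge 1$) that you describe to the two integrals $\int(\psi^2-\psi_0^2)\,d\xi$ and $\int(A_\phi-\psi^2)h'\,d\xi$ appearing there; the point is that these two pieces cancel up to boundary terms and an $O(x^{-\mu})$ remainder. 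You clearly have this technique in hand (you use it correctly in paragraph~2 for the $\chi-b_0'h$ identity), so this is a misattribution rather than a gap, but the logical order matters: \eqref{3.81} must be established \emph{before} one can pass from \eqref{3.5} to \eqref{3.82} and then to \eqref{3.9}.
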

\section{Proofs of the main theorems}\label{sc4}
Theorems \ref{thm2.1} and \ref{thm2.2} are immediately derived from the 
following proposition.
\begin{prop}\label{prop4.1}
Under supposition \eqref{3.6} with $0<\mu \le 1,$
$$
h(x)=-\frac{2((\theta_0-\theta_1)^2+\theta_{\infty}^2)}{A_{\phi}-1}x^{-1}
- \int^x_{\infty} F_1(\psi_0, b_0)\frac{d\xi}{\xi} -\frac 32 \int^x_{\infty}
F_1(\psi_0,b_0)^2\frac{d\xi}{\xi^2}+O(x^{-2\mu})
$$
in $\check{S}_{\mathrm{cut}}(\phi,t_{\infty},\kappa_0,\delta_0)$, where 
$$
\int^x_{\infty} F_1(\psi_0,b_0)\frac{d\xi}{\xi} \ll x^{-1}, \quad
\int^x_{\infty} F_1(\psi_0,b_0)^2\frac{d\xi}{\xi^2} \ll x^{-1}. 
$$
\end{prop}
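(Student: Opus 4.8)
The plan is to start from the two integral identities of Proposition~\ref{prop3.2} and run a bootstrap on the exponent $\mu$, while simultaneously evaluating the only genuinely singular integral, namely $\int^x_{\infty}(F_2(\psi_0)-\tfrac32F_1(\psi_0,b_0)^2)\,\xi^{-2}\,d\xi$, explicitly in terms of $\vartheta$-quotients. First I would feed the second identity of Proposition~\ref{prop3.2}, $\chi-b_0'h=\int^x_{\infty}(A_{\phi}-\psi_0^2)(2F_2(\psi_0)-F_1(\psi_0,b_0)^2)\xi^{-2}\,d\xi+O(x^{-2\mu})$, into the term $\tfrac12\int^x_{\infty}(\chi-b_0'h)(A_{\phi}-\psi_0^2)^{-1}\xi^{-1}\,d\xi$ appearing in the first identity. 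Since the right-hand side of that second identity is itself $O(x^{-1})$ (the integrand is $O(\xi^{-2})$ with a bounded, though pole-bearing, coefficient), integrating it once more against $d\xi/\xi$ produces a term of size $O(x^{-1}\log x)$ at worst, and in fact $O(x^{-1})$ after the $\vartheta$-function bookkeeping; in any case it is absorbed into $O(x^{-2\mu})$ once $\mu$ has been pushed up to $1$.

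The core computation is the evaluation of $\int^x_{\infty}F_2(\psi_0)\,\xi^{-2}\,d\xi$. Here I would use the partial-fraction decomposition
\[
F_2(\psi_0)=\frac{2\bigl(2(\theta_0-\theta_1)\theta_{\infty}\psi_0+(\theta_0-\theta_1)^2+\theta_{\infty}^2\bigr)}{(1-\psi_0^2)(A_{\phi}-\psi_0^2)},
\]
splitting $\bigl((1-\psi_0^2)(A_{\phi}-\psi_0^2)\bigr)^{-1}=(A_{\phi}-1)^{-1}\bigl((1-\psi_0^2)^{-1}-(A_{\phi}-\psi_0^2)^{-1}\bigr)$, and likewise for the $\psi_0$-linear piece. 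Because $\psi_0(\xi)=A_{\phi}^{1/2}\sn((\xi-x_0)/2;A_{\phi}^{1/2})$ is, up to an affine change of the argument, a ratio of $\vartheta$-functions with the elliptic modulus fixed by the Boutroux equations, each of the elementary integrands $\psi_0/(A_{\phi}-\psi_0^2)$, $1/(A_{\phi}-\psi_0^2)$, $\psi_0/(1-\psi_0^2)$, $1/(1-\psi_0^2)$ integrates against plain $d\xi$ (not $d\xi/\xi$) to an explicit combination of $\log\vartheta$ and $\vartheta'/\vartheta$, which is bounded on $\check S_{\mathrm{cut}}$ away from the excised discs. One integration by parts then converts $\int^x_{\infty}(\,\cdot\,)\,\xi^{-2}\,d\xi$ into the boundary term $\bigl[(\text{bounded primitive})\cdot(-\xi^{-2})\bigr]$ plus $\int^x_{\infty}(\text{bounded primitive})\cdot 2\xi^{-3}\,d\xi=O(x^{-2})$; evaluating the boundary term at $\xi=x$ and recognising that its $\infty$-limit vanishes gives a closed expression of size exactly $O(x^{-1})$, whose leading coefficient one reads off as $-2((\theta_0-\theta_1)^2+\theta_{\infty}^2)/(A_{\phi}-1)$ after noting that the $\psi_0$-linear contributions integrate to single-valued, hence mean-zero-at-$\infty$, pieces and that the $\psi_0$-even contributions supply the stated constant via the residue at $\psi_0^2=1$. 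This is exactly the reasoning Remark~\ref{rem2.1} anticipates; in the write-up I would present the partial-fraction split and the integration by parts, and relegate the bookkeeping of which $\vartheta$-primitives are bounded to a reference to Section~\ref{ssc4.3}.

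With that explicit $O(x^{-1})$ term isolated, Proposition~\ref{prop3.2}'s first identity reads $h=-\tfrac{2((\theta_0-\theta_1)^2+\theta_{\infty}^2)}{A_{\phi}-1}x^{-1}-\int^x_{\infty}F_1(\psi_0,b_0)\,\xi^{-1}\,d\xi-\tfrac32\int^x_{\infty}F_1(\psi_0,b_0)^2\,\xi^{-2}\,d\xi+O(x^{-2\mu})+(\text{the }\chi\text{-feedback term})$. The remaining task is to check that $\int^x_{\infty}F_1(\psi_0,b_0)\,\xi^{-1}\,d\xi\ll x^{-1}$: writing $F_1(\psi_0,b_0)$ via the Remark~\ref{rem2.1} decomposition as a combination of $\psi_0/(A_{\phi}-\psi_0^2)$, $1/(A_{\phi}-\psi_0^2)$ and $\mathfrak b/(A_{\phi}-\psi_0^2)$, each with a bounded primitive in $\xi$ (the $\mathfrak b$ term being handled because $\mathfrak b(\xi)$ is itself a bounded $\vartheta'/\vartheta$-plus-linear expression and $\mathfrak b/(A_{\phi}-\psi_0^2)$ has an explicit primitive), one integration by parts against $d\xi/\xi$ yields the $O(x^{-1})$ bound; similarly $F_1(\psi_0,b_0)^2/(A_{\phi}-\psi_0^2)^{?}$ against $d\xi/\xi^2$ is $O(x^{-1})$, consistent with Remark~\ref{rem2.1}'s stated leading term. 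Feeding these $O(x^{-1})$ bounds back, the right-hand side is $O(x^{-1})+O(x^{-2\mu})$, so $h\ll x^{-\max(1,2\mu)}$; starting from $\mu=1/9$ (legitimate by Proposition~\ref{prop3.1}) the iteration $\mu\mapsto\min(1,2\mu)$ reaches $\mu=1$ in finitely many steps, at which point $O(x^{-2\mu})=O(x^{-2})$ and the displayed formula for $h$ in Proposition~\ref{prop4.1} follows, together with the two $O(x^{-1})$ integral bounds. The main obstacle I anticipate is not the bootstrap itself but the careful verification that every primitive arising from the partial-fraction pieces—especially those involving $\mathfrak b(\xi)$ and the double pole $(A_{\phi}-\psi_0^2)^{-2}$—is genuinely bounded on $\check S_{\mathrm{cut}}(\phi,t_{\infty},\kappa_0,\delta_0)$ uniformly, i.e. that the $\vartheta$-quotients stay away from their zeros along the specified contour, which is where the cut structure and the excision of the discs around $\mathcal Q$ are used in an essential way; this is the content I would organise into Section~\ref{ssc4.3}.

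\smallskip
Here $\check S_{\mathrm{cut}}$ abbreviates $\check S_{\mathrm{cut}}(\phi,t_{\infty},\kappa_0,\delta_0)$ throughout.
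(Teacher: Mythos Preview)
Your overall architecture matches the paper's: feed the second identity of Proposition~\ref{prop3.2} into the first, evaluate $\int^x_{\infty}F_2(\psi_0)\xi^{-2}d\xi$ explicitly via Lemma~\ref{lem4.2}-type primitives, and bound $\int^x_{\infty}F_1(\psi_0,b_0)\xi^{-1}d\xi$ by partial fractions and integration by parts. Two steps, however, do not go through as you describe.

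\emph{The feedback term $\mathcal{J}_1$.} You claim it is $O(x^{-1})$ and ``absorbed into $O(x^{-2\mu})$ once $\mu$ has been pushed up to $1$''. This is the wrong direction: with $\mu=1$ the stated error is $O(x^{-2})$, and an $O(x^{-1})$ contribution would dominate it, so the formula in Proposition~\ref{prop4.1} would fail precisely at the step where you need it most. The paper shows $\mathcal{J}_1\ll x^{-2}$ directly, by an integration by parts that trades the outer $d\xi/\xi$ against the inner $O(\xi^{-1})$ integral; the point is that $\int^x_{\infty}(A_{\phi}-\psi_0^2)^{-1}\xi^{-1}d\xi$ is itself already $O(x^{-1})$, so the product of the two $O(x^{-1})$ factors gives $O(x^{-2})$. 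You have the ingredients for this but not the estimate. (Incidentally, the bootstrap $\mu\mapsto 2\mu$ is not part of the proof of Proposition~\ref{prop4.1}; it is how the paper derives Theorems~\ref{thm2.1} and \ref{thm2.2} \emph{from} Proposition~\ref{prop4.1}.)

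\emph{The $\mathfrak{b}$-integral.} Your assertion that $\mathfrak{b}/(A_{\phi}-\psi_0^2)$ ``has an explicit primitive'' that is bounded on $\check{S}_{\mathrm{cut}}$ is not correct, and this is the heart of Proposition~\ref{prop4.3}. Writing $(A_{\phi}-\psi_0^2)^{-1}$ as $((A_{\phi}-1)\Omega_{\mathbf{a}}A_{\phi})^{-1}(g(\sigma-\alpha_0)+g(\sigma+\alpha_0))_{\sigma}$ and integrating by parts leaves $\int g(\sigma\pm\alpha_0)g_{\sigma}(\sigma)\tilde{\sigma}^{-1}d\sigma$, which is \emph{not} obviously $O(s^{-1})$: both factors are merely bounded. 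The paper's device is a substitution $\rho=\sigma+\alpha_0$ that converts $\int g_{\sigma}(\sigma+\alpha_0)g(\sigma)\tilde{\sigma}^{-1}d\sigma$ into $-\int g_{\sigma}(\sigma-\alpha_0)g(\sigma)\tilde{\sigma}^{-1}d\sigma+O(s^{-1})$, so that the sum over $\pm\alpha_0$ cancels to $O(s^{-1})$. Without this symmetry cancellation the required bound $\int^x_{\infty}F_1(\psi_0,b_0)\xi^{-1}d\xi\ll x^{-1}$ is not established, and the whole bootstrap stalls at the first step.
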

{\bf Derivation of Theorems \ref{thm2.1} and \ref{thm2.2}.}
By Proposition \ref{prop3.1} or \cite[Theorem 2.1]{S1}, estimate 
\eqref{3.6} with $\mu=1/9$ is valid, and Proposition \ref{prop4.1}
with $\mu=1/9$ leads us to \eqref{3.6} with $h(x) \ll x^{-2/9}$ in
$\check{S}_{\mathrm{cut}}(\phi,t_{\infty},\kappa_0,\delta_0)$. 
Then Proposition \ref{prop4.1}
with $\mu=2/9$ yields the asymptotic formula for $h(x)$ with the error term
$O(x^{-4/9})$ and the estimate $h(x) \ll x^{-4/9}.$ Twice more repetition
of this procedure leads us to the desired asymptotic formula for 
$h(x)$ of Theorem \ref{thm2.2} in $\check{S}_{\mathrm{cut}}(\phi,t_{\infty},
\kappa_0,\delta_0).$ 
By Remark \ref{rem2.0}, in $\check{S}_{\mathrm{cut}}(\phi,t_{\infty},\kappa_0,
\delta_0)$, 
$$
(y(x)+1)(y(x)-1)^{-1} -A_{\phi}^{1/2} \mathrm{sn}((x-x_0)/2;A_{\phi}^{1/2}) 
\ll \psi_0'(x)h(x),
$$
where the left-hand side is holomorphic in $S(\phi_0,t_{\infty},\kappa_0,
\delta_0)$. 
By the maximal modulus principle, we have Theorem \ref{thm2.1}. 
\hfill$\square$
\begin{rem}\label{rem4.1}
By the argument above with \eqref{3.81} or Proposition \ref{prop3.2},  
in $S(\phi,t_{\infty},\kappa_0,\delta_0)$
$$
b(x)=b_0(x)+O(x^{-1}).
$$
\end{rem}
To complete the proofs of Theorems \ref{thm2.1} and \ref{thm2.2} it remains
to establish Proposition \ref{prop4.1}. 
The main part of the proof consists of evaluation of integrals, in which 
the following primitive functions are used \cite[Lemma 6.3]{S}.
\begin{lem}\label{lem4.2}
Let $\nu_0=(1+\tau_0)/2$ with $\tau_0=\Omega_{\mathbf{b}}/\Omega_{\mathbf{a}}$.
Then, for $\sn\, u=\sn (u;A_{\phi}^{1/2}),$
\begin{align*}
\int^u_0 \frac{du}{1-\sn^2u}=&\frac 1{(A_{\phi}-1)\Omega_{\mathbf{a}}}
\\
& \times
\Bigl(\mathcal{E}_{\mathbf{a}}u+\frac{\vartheta'}{\vartheta} \Bigl(\frac u
{\Omega_{\mathbf{a}}} -\frac 14+\nu_0, \tau_0 \Bigr)
+\frac{\vartheta'}{\vartheta} \Bigl(\frac u
{\Omega_{\mathbf{a}}} +\frac 14+\nu_0, \tau_0 \Bigr) +c_1 \Bigr),
\\
\int^u_0 \frac{\sn\, u\, du}{1-\sn^2u}=&\frac 1{(A_{\phi}-1)\Omega_{\mathbf{a}}}
\Bigl(\frac{\vartheta'}{\vartheta} \Bigl(\frac u
{\Omega_{\mathbf{a}}} -\frac 14+\nu_0, \tau_0 \Bigr)
-\frac{\vartheta'}{\vartheta} \Bigl(\frac u
{\Omega_{\mathbf{a}}} +\frac 14+\nu_0, \tau_0 \Bigr) +c_2 \Bigr),
\\
\int^u_0 \frac{du}{1- A_{\phi}\sn^2u}=&\frac 1{(1-A_{\phi})\Omega_{\mathbf{a}}}
\Bigl(\mathcal{E}_{\mathbf{a}} u
+\frac{\vartheta'}{\vartheta} \Bigl(\frac u
{\Omega_{\mathbf{a}}} -\frac 14, \tau_0 \Bigr)
+\frac{\vartheta'}{\vartheta} \Bigl(\frac u
{\Omega_{\mathbf{a}}} +\frac 14, \tau_0 \Bigr) \Bigr) +u,
\\
\int^u_0 \frac{\sn\, u\, du}{1- A_{\phi}\sn^2u}=&
\frac 1{A_{\phi}^{1/2}(1-A_{\phi})\Omega_{\mathbf{a}}}
\Bigl(\frac{\vartheta'}{\vartheta} \Bigl(\frac u
{\Omega_{\mathbf{a}}} +\frac 14, \tau_0 \Bigr)
-\frac{\vartheta'}{\vartheta} \Bigl(\frac u
{\Omega_{\mathbf{a}}} -\frac 14, \tau_0 \Bigr)+c_3 \Bigr), 
\\
\int^u_0 \frac{du}{(1-\sn^2u)^2}=& \frac{-1}{6(A_{\phi}-1)^2\Omega_{\mathbf{a}}}
\Bigl( \Bigl(\frac d{du}\Bigr)^{\! 2}+ { 4(1-2A_{\phi})} \Bigr) 
\Bigl(\mathcal{E}_{\mathbf{a}}u
+\frac{\vartheta'}{\vartheta} \Bigl(\frac u
{\Omega_{\mathbf{a}}} -\frac 14+\nu_0, \tau_0 \Bigr)
\\
& +\frac{\vartheta'}{\vartheta} \Bigl(\frac u
{\Omega_{\mathbf{a}}} +\frac 14+\nu_0, \tau_0 \Bigr) \Bigr)
-\frac {A_{\phi}}{3(A_{\phi}-1)}u,
\\
\int^u_0 \frac{\sn\, u\,du}{(1-\sn^2u)^2}=&
 \frac {-1}{6(A_{\phi}-1)^2\Omega_{\mathbf{a}}}
\Bigl( \Bigl(\frac d{du}\Bigr)^{\! 2} +1-5A_{\phi} \Bigr)
\Bigl(\frac{\vartheta'}{\vartheta} \Bigl(\frac u
{\Omega_{\mathbf{a}}} -\frac 14+\nu_0, \tau_0 \Bigr)
\\
& -\frac{\vartheta'}{\vartheta} \Bigl(\frac u
{\Omega_{\mathbf{a}}} +\frac 14+\nu_0, \tau_0 \Bigr) +c_4 \Bigr),
\end{align*}
where $c_j$ $(1\le j\le 4)$ are some constants.
\end{lem}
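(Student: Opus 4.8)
The approach is the classical one: realise $\sn u=\sn(u;A_\phi^{1/2})$ together with $\cn u$ and $\mathrm{dn}\,u$ through $\vartheta$-functions adapted to the lattice $\Omega_{\mathbf a}\Z+\Omega_{\mathbf b}\Z$, so that each of the six integrands becomes an explicit elliptic function of $u$, and then recover every antiderivative from its principal parts via the uniqueness theory for elliptic integrals of the first and second kind. First I would fix the normalisation, matching the periods of $\sn(\cdot;A_\phi^{1/2})$ and its second-kind (Legendre) quasi-period constant with the appropriate multiples of $\Omega_{\mathbf a}$, $\Omega_{\mathbf b}$ and $\mathcal{E}_{\mathbf a}$; this pins the zero loci of $1-\sn^2u=\cn^2u$ and of $1-A_\phi\sn^2u=\mathrm{dn}^2u$ to the points encoded by the quarter-period shifts $\pm\tfrac14$ and the half-period $\nu_0=(1+\tau_0)/2$. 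The fact used throughout is that, by the quasi-periodicity recalled in \textbf{(4)} (so $\tfrac{\vartheta'}{\vartheta}(z+1,\tau_0)=\tfrac{\vartheta'}{\vartheta}(z,\tau_0)$ while $\tfrac{\vartheta'}{\vartheta}(z+\tau_0,\tau_0)=\tfrac{\vartheta'}{\vartheta}(z,\tau_0)-2\pi i$), the function $u\mapsto\tfrac{d}{du}\tfrac{\vartheta'}{\vartheta}(u/\Omega_{\mathbf a}+\ast,\tau_0)$ is genuinely $(\Omega_{\mathbf a}\Z+\Omega_{\mathbf b}\Z)$-periodic with one double pole per fundamental cell, whereas $\tfrac{\vartheta'}{\vartheta}$ itself serves as a primitive of the second kind.

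For the four integrands with a first-power denominator the argument runs as follows. Each is an elliptic function of order two whose double poles sit at the two zeros (per cell) of $\cn u$, respectively $\mathrm{dn}\,u$, with vanishing residue, so the corresponding $du$-differential is of the second kind and its primitive has a simple pole at each of those two points; hence the primitive is a linear combination of the two relevant $\tfrac{\vartheta'}{\vartheta}$-terms, plus a possible term $Au$, plus a constant. For $1/(1-\sn^2u)$ and $1/(1-A_\phi\sn^2u)$, which are even in $u$ with odd primitives, the two residues are equal, so the two $\tfrac{\vartheta'}{\vartheta}$-terms enter with the \emph{same} sign; their sum drops by $-4\pi i$ under $u\mapsto u+\Omega_{\mathbf b}$, which must be reproduced by the nonzero second-kind $\mathbf b$-period of the primitive, and this, together with its $\mathbf a$-period, forces $A$ to be the displayed multiple of $\mathcal{E}_{\mathbf a}$ --- a matching that amounts to a Legendre-type relation among $\Omega_{\mathbf a},\Omega_{\mathbf b},\mathcal{E}_{\mathbf a},\mathcal{E}_{\mathbf b}$. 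Comparing one leading Laurent coefficient at a pole pins the overall factor $1/((A_\phi-1)\Omega_{\mathbf a})$, resp.\ $1/((1-A_\phi)\Omega_{\mathbf a})$, and the value at $u=0$ together with $c_1$ settles the remaining constant; the extra summand $+u$ in the $1/(1-A_\phi\sn^2u)$ identity is precisely the constant-term mismatch of the two sides. For $\sn u/(1-\sn^2u)$ and $\sn u/(1-A_\phi\sn^2u)$, odd with even primitives, the two residues are \emph{opposite}, so the $\tfrac{\vartheta'}{\vartheta}$-terms enter with opposite signs; their difference is genuinely $(\Omega_{\mathbf a}\Z+\Omega_{\mathbf b}\Z)$-periodic (the $-2\pi i$ jumps cancel), no linear term is needed, and one obtains the asserted formulas up to the constants $c_2,c_3$.

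The two integrands with a fourth-power denominator I would reduce to the above. Putting $p(u):=1/(1-\sn^2u)=\cn^{-2}u$, the identities $\sn'=\cn\,\mathrm{dn}$, $\cn'=-\sn\,\mathrm{dn}$, $\mathrm{dn}'=-A_\phi\sn\,\cn$ give $(p')^2$ as a cubic in $p$, hence $p''=6(1-A_\phi)p^2+4(2A_\phi-1)p-2A_\phi$; solving for $p^2$ yields $1/(1-\sn^2u)^2=\tfrac{-1}{6(A_\phi-1)}\bigl(p''+4(1-2A_\phi)p+2A_\phi\bigr)$, and integrating term by term, using $\int^u_0 p\,du$ from the previous step and $\int^u_0 p''\,du=p'(u)-p'(0)$, reproduces precisely the asserted expression: the operator $\bigl((d/du)^2+4(1-2A_\phi)\bigr)$ acts on the combination $\mathcal{E}_{\mathbf a}u+\tfrac{\vartheta'}{\vartheta}+\tfrac{\vartheta'}{\vartheta}$, with residual linear term $-A_\phi u/(3(A_\phi-1))$. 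Likewise, with $q(u):=\sn u/(1-\sn^2u)$ one finds $q''=6(1-A_\phi)\,\sn u/(1-\sn^2u)^2+(5A_\phi-1)q$, whence $\sn u/(1-\sn^2u)^2=\tfrac{-1}{6(A_\phi-1)}\bigl((d/du)^2+1-5A_\phi\bigr)q$; integrating and invoking the $\sn u$-weighted formula of the previous step gives the last identity, with $\bigl((d/du)^2+1-5A_\phi\bigr)$ now acting on the difference of two $\tfrac{\vartheta'}{\vartheta}$-terms.

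The genuinely delicate part is the constant bookkeeping: pinning the quarter-period shifts $\pm\tfrac14$ and the half-period $\nu_0=(1+\tau_0)/2$ to the true zero loci of $\cn$ and $\mathrm{dn}$ in this normalisation (including every factor of $2$ between $\Omega_{\mathbf a},\Omega_{\mathbf b}$ and the classical quarter-periods), identifying $A$ with $\mathcal{E}_{\mathbf a}$ through the Legendre relation, deciding in each case whether a residual linear term survives, and obtaining the coefficients $-6$, $4(1-2A_\phi)$, $1-5A_\phi$ and $-A_\phi/(3(A_\phi-1))$ without sign slips. All of these I would verify by matching Laurent expansions at $u=0$ (where $\sn=0$, $\cn=\mathrm{dn}=1$) and at one pole of the integrand, and by checking $(\Omega_{\mathbf a}\Z+\Omega_{\mathbf b}\Z)$-periodicity of each derivative; the structural steps above are otherwise just the standard uniqueness theory. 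Since the statement is \cite[Lemma~6.3]{S}, these verifications may equally be cited.
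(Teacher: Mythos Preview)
Your proposal is correct and complete in outline, but the route differs from the paper's. For $1/(1-\sn^2u)$ the paper does not argue abstractly via principal parts and period matching; instead it invokes the half-period translation identity $(k^2-1)(\sn^2u-1)^{-1}\equiv k^2(\sn^2(u-K+iK')-1)$ (with $4K=\Omega_{\mathbf a}$, $2iK'=\Omega_{\mathbf b}$, $k^2=A_\phi$), recognises the right-hand side as the $u$-derivative of the two $\vartheta'/\vartheta$-terms up to an additive constant $c_0$, and then pins $c_0=-\mathcal{E}_{\mathbf a}/\Omega_{\mathbf a}$ by integrating over the concrete segment $[-iK',-iK'+K]$. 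For $1/(1-\sn^2u)^2$ it similarly rewrites the integrand as $k^4\cn^4(u-K+iK')/(k^2-1)^2$, matches this against $\bigl((d/du)^3+\text{const}\cdot d/du\bigr)$ of the same theta sum, and fixes the constants through the classical evaluation $k^4\int_0^K\cn^4u\,du=\tfrac16(2k^2-1)\mathcal{E}_{\mathbf a}+\tfrac{k^2}{12}(1-k^2)\Omega_{\mathbf a}$. Your ODE reductions $p''=6(1-A_\phi)p^2+4(2A_\phi-1)p-2A_\phi$ and $q''=6(1-A_\phi)\,\sn u/(1-\sn^2u)^2+(5A_\phi-1)q$ are a cleaner way to handle the squared denominators, since they reduce those cases directly to the first-power formulas already established and bypass the auxiliary $\cn^4$-integral entirely. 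Conversely, the paper's device of fixing constants by one definite integration along a chosen segment disposes of exactly the ``delicate bookkeeping'' you flag (Legendre relation, quarter-period shifts, residual linear terms) in a single stroke rather than by Laurent matching at several points. Either argument is acceptable here, particularly as the lemma is also \cite[Lemma~6.3]{S}.
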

\begin{proof}
Recall the notation $4K=\Omega_{\mathbf{a}},$ $2iK'=\Omega_{\mathbf{b}}$
and $k=A_{\phi}^{1/2}.$ Observing the behaviours around the poles $u=2K\pm K$,
we have
$$
(k^2-1)(\sn^2u-1)^{-1} \equiv k^2(\sn^2(u-K+iK')-1),
$$
and 
\begin{align*}
 k^2(\sn^2(u-K & +iK')-1)
\\
+& \frac 1{\Omega_{\mathbf{a}}} \frac{d}{du}
\Bigl(\frac{\vartheta'}{\vartheta}
 \Bigl(\frac u{\Omega_{\mathbf{a}}} -\frac 14 +\nu_0,\tau_0\Bigr)+
\frac{\vartheta'}{\vartheta}
 \Bigl(\frac u{\Omega_{\mathbf{a}}} +\frac 14 +\nu_0,\tau_0\Bigr) \Bigr)
\equiv c_0
\end{align*}
(cf. \cite{H}, \cite{WW}). Integration on $[-iK',-iK'+K]$ yields 
$c_0=-\mathcal{E}_{\mathbf{a}}/\Omega_{\mathbf{a}},$ which implies 
the first formula. From
$$
(k^2-1)^2(\sn^2u-1)^{-2} \equiv k^4\cn^4(u-K+iK'),
$$
and
\begin{align*}
k^4&\cn^4 (u-K+iK')
\\
+& \Bigl(\frac 1{6\Omega_{\mathbf{a}}}\Bigl(\frac d{du}\Bigr)^{\!\! 3}+
\frac{2(1-2k^2)}{3\Omega_{\mathbf{a}}} \frac d{du} \Bigr)
\Bigl(\frac{\vartheta'}{\vartheta}
 \Bigl(\frac u{\Omega_{\mathbf{a}}} -\frac 14 +\nu_0,\tau_0\Bigr)+
\frac{\vartheta'}{\vartheta}
 \Bigl(\frac u{\Omega_{\mathbf{a}}} +\frac 14 +\nu_0,\tau_0\Bigr) \Bigr)
\equiv c_0
\end{align*}
with
$$
k^4\int^K_0 \cn^4u du = \frac 16(2k^2-1)\mathcal{E}_{\mathbf{a}} +
\frac{k^2}{12}(1-k^2)\Omega_{\mathbf{a}},
$$
the primitive function of $(\sn^2u-1)^{-2}$ follows.
\end{proof}
\subsection{Evaluation of integrals}\label{ssc4.1}
Write
$$
g(s)=\frac{\mathcal{E}_{\mathbf{a}}}2 s +\frac{\vartheta'}{\vartheta}
\Bigl(\frac s{\Omega_{\mathbf{a}}}, \tau_0 \Bigr), 
$$
which is bounded for $2s+x_0 \in {S}(\phi, t_{\infty}, \kappa_0,\delta_0)$ and
satisfies $g((x-x_0)/2)=\mathfrak{b}(x).$ Then, by Lemma \ref{lem4.2}, 
\begin{align*}
& \int^s_{\infty} \frac{\sn\,\sigma}{1-\sn^2\sigma}\frac{d\sigma}{\tilde{\sigma}}
 =\frac 1{(A_{\phi}-1)\Omega_{\mathbf{a}}} \int^s_{\infty} \Bigl(\frac
{\vartheta'}{\vartheta}\Bigl(\frac{\sigma-\alpha_0}{\Omega_{\mathbf{a}}}\Bigr)
- \frac
{\vartheta'}{\vartheta}\Bigl(\frac{\sigma+\alpha_0}{\Omega_{\mathbf{a}}}\Bigr)
\Bigr)_{\! \sigma} \frac{d\sigma}{\tilde{\sigma}}
\\
& \ll \biggl| \Bigl(\frac{\vartheta'}{\vartheta}
\Bigl(\frac{s-\alpha_0}{\Omega_{\mathbf{a}}}\Bigr)
- \frac{\vartheta'}{\vartheta}
\Bigl(\frac{s+\alpha_0}{\Omega_{\mathbf{a}}}\Bigr)
\Bigr)\frac 1{\tilde{s}} \biggr| +
\biggl| \int^s_{\infty} \Bigl(\frac{\vartheta'}{\vartheta}
\Bigl(\frac{\sigma-\alpha_0}{\Omega_{\mathbf{a}}}\Bigr)
- \frac{\vartheta'}{\vartheta}
\Bigl(\frac{\sigma+\alpha_0}{\Omega_{\mathbf{a}}}\Bigr)
\Bigr)\frac {d\sigma}{\tilde{\sigma}^2}  \biggr| \ll s^{-1} 
\end{align*}
and
\begin{align*}
\int^s_{\infty}& \frac{1}{1-\sn^2\sigma}  \frac{d\sigma}{\tilde{\sigma}}
 =\frac 1{(A_{\phi}-1)\Omega_{\mathbf{a}}} \int^s_{\infty} \Bigl(\mathcal{E}
_{\mathbf{a}} \sigma +\frac{\vartheta'}{\vartheta}
\Bigl(\frac{\sigma-\alpha_0}{\Omega_{\mathbf{a}}}\Bigr)
+ \frac{\vartheta'}{\vartheta}
\Bigl(\frac{\sigma+\alpha_0}{\Omega_{\mathbf{a}}}\Bigr)
\Bigr)_{\! \sigma} \frac{d\sigma}{\tilde{\sigma}}
\\
& =\frac 1{(A_{\phi}-1)\Omega_{\mathbf{a}}} \int^s_{\infty} (g(\sigma-\alpha_0)
+g(\sigma+\alpha_0))_{\sigma}  \frac {d\sigma}{\tilde{\sigma}}
\\
& \ll \Bigl|(g(s-\alpha_0)+g(s+\alpha_0) )\frac 1{\tilde{s}}
\Bigr| + \biggl| \int^s_{\infty} (g(\sigma-\alpha_0)
+g(\sigma+\alpha_0)) \frac {d\sigma}{\tilde{\sigma}^2}\biggr| \ll s^{-1} 
\end{align*}
with $\tilde{\sigma}=\sigma + x_0/2,$ $\tilde{s}=s+x_0/2$ and
$\alpha_0=(1/4+\nu_0)\Omega_{\mathbf{a}}$, which also implies 
the convergence of these integrals. Then we may write 
\begin{align}\notag
& \int^x_{\infty} F_1(\psi_0,b_0)\frac{d\xi}{\xi} = \int^x_{\infty}\frac
{(4(\theta_0+\theta_1)\psi_0-b_0)}{2(A_{\phi}-\psi_0^2)}\frac{d\xi}{\xi}
\\
\notag
=& 2(\theta_0+\theta_1)\int^x_{\infty}\frac{\psi_0}{A_{\phi}-\psi_0^2}\frac{d\xi}
{\xi}
-\frac {b_0(x_0)}2 \int^x_{\infty}\frac 1{A_{\phi}-\psi_0^2}\frac{d\xi}{\xi}
+ \frac 4{\Omega_{\mathbf{a}}} \int^x_{\infty}\frac{\mathfrak{b}(\xi)}{A_{\phi}
-\psi_0^2} \frac{d\xi}{\xi}
\\
\label{4.1}
=&\frac{2(\theta_0+\theta_1)}{A_{\phi}^{1/2}} \int^s_{\infty} \frac{\sn\,\sigma}
{1-\sn^2\sigma} \frac{d\sigma}{\tilde{\sigma}}
-\frac {b_0(x_0)}{2A_{\phi}} 
\int^s_{\infty}\frac 1{1-\sn^2\sigma}\frac{d\sigma}{\tilde
{\sigma}} +\frac{4}{A_{\phi}\Omega_{\mathbf{a}}} \int^s_{\infty} \frac
{g(\sigma)}{1-\sn^2\sigma} \frac{d\sigma}{\tilde{\sigma}}, 
\end{align}
with $\sigma=(\xi-x_0)/2$, $ s=(x-x_0)/2,$ $b_0(x_0)=\beta_0-{2\mathcal{E}
_{\mathbf{a}}}\Omega_{\mathbf{a}}^{-1} x_0$. 
On the last two lines of \eqref{4.1} the first two integrals converge,
and consequently, by Proposition \ref{prop3.2}, so the integral
containing $\mathfrak{b}(\xi)$ or $g(\sigma)$. Let us evaluate it. Set
\begin{equation*}
 \mathcal{J}_0:=
(A_{\phi}-1)\Omega_{\mathbf{a}} \int^s_{\infty} \frac{g(\sigma)}{1-\sn^2\sigma}
\frac{d\sigma}{\tilde{\sigma}} = \int^s_{\infty} (g(\sigma-\alpha_0)
+g(\sigma+\alpha_0) )_{\sigma} g(\sigma) \frac{d\sigma}{\tilde{\sigma}}.
\end{equation*}
For any sequence $\{s_{\nu}\}$ with $s_{\nu}=(x_{\nu}-x_0)/2,$  
\begin{align*}
 \int^s_{s_{\nu}} g_{\sigma}(\sigma+\alpha_0) g(\sigma)\frac{d\sigma}{\tilde
{\sigma}}=& g(\sigma+\alpha_0)g(\sigma) \tilde{\sigma}^{-1} \Bigr]^s_{s_{\nu}}
\\
&- \int^s_{s_{\nu}} g(\sigma+\alpha_0) g_{\sigma}(\sigma)
\frac{d\sigma}{\tilde{\sigma}}
+ \int^s_{s_{\nu}} g(\sigma+\alpha_0) g(\sigma)
\frac{d\sigma}{\tilde{\sigma}^2}
\\
=& -\int^{s+\alpha_0}_{s_{\nu}+\alpha_0}
 g(\rho) g_{\rho}(\rho-\alpha_0)\frac{d\rho}{\tilde{\rho}-\alpha_0}
+O(s^{-1})+O(s^{-1}_{\nu}) 
\\
=& -\int^s_{s_{\nu}} g_{\sigma}(\sigma-\alpha_0)g(\sigma)
 \frac{d\sigma}{\tilde{\sigma}}+O(s^{-1})+O(s^{-1}_{\nu}), 
\end{align*}
which implies $\mathcal{J}_0 \ll s^{-1}.$
Thus we have the following crucial estimate. 
\begin{prop}\label{prop4.3}
In $\check{S}_{\mathrm{cut}}(\phi,t_{\infty},\kappa_0,\delta_0)$,
\begin{align*}
\int^x_{\infty}F_1(\psi_0,b_0)\frac{d\xi}{\xi} =& 2(\theta_0+\theta_1)
\int^x_{\infty} \frac{\psi_0}{A_{\phi}-\psi_0^2}\frac{d\xi}{\xi}
\\
& -\frac{b_0(x_0)}{2}
\int^x_{\infty} \frac{1}{A_{\phi}-\psi_0^2}\frac{d\xi}{\xi}
+\frac 4{\Omega_{\mathbf{a}}}
\int^x_{\infty} \frac{\mathfrak{b}(\xi)}{A_{\phi}-\psi_0^2}\frac{d\xi}{\xi}
\ll x^{-1},
\end{align*}
where each integral on the right-hand side is $O(x^{-1}).$
\end{prop}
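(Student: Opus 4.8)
The plan is to reduce the estimate for $\int^x_{\infty}F_1(\psi_0,b_0)\,d\xi/\xi$ to three separate integrals by linearity, using the decomposition of $b_0$ into its linear part $b_0(x_0)-2\mathcal{E}_{\mathbf{a}}\Omega_{\mathbf{a}}^{-1}x$ and the bounded quasi-periodic part encoded in $\mathfrak{b}(\xi)=g((\xi-x_0)/2)$. Indeed, from the definitions one has
\[
\frac{b_0(\xi)}{2(A_{\phi}-\psi_0(\xi)^2)}
=\frac{b_0(x_0)}{2(A_{\phi}-\psi_0^2)}-\frac{4}{\Omega_{\mathbf{a}}}\cdot\frac{\mathfrak{b}(\xi)}{A_{\phi}-\psi_0^2},
\]
so that $F_1(\psi_0,b_0)$ splits exactly as in the first display of \eqref{4.1}. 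The change of variables $\sigma=(\xi-x_0)/2$, $s=(x-x_0)/2$ turns $A_{\phi}-\psi_0^2$ into $A_{\phi}(1-\sn^2\sigma)$ and $d\xi/\xi$ into $d\sigma/\tilde\sigma$ with $\tilde\sigma=\sigma+x_0/2$, reducing everything to the three integrals on the last line of \eqref{4.1}.

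Next I would dispose of the first two integrals. For $\int^s_{\infty}\sn\,\sigma\,(1-\sn^2\sigma)^{-1}\,d\sigma/\tilde\sigma$ and $\int^s_{\infty}(1-\sn^2\sigma)^{-1}\,d\sigma/\tilde\sigma$, the key is that by Lemma \ref{lem4.2} the integrands $(1-\sn^2\sigma)^{-1}$ and $\sn\,\sigma\,(1-\sn^2\sigma)^{-1}$ are exact $\sigma$-derivatives of bounded functions (combinations of $g(\sigma\pm\alpha_0)$ and of $(\vartheta'/\vartheta)(\frac{\sigma\mp\alpha_0}{\Omega_{\mathbf{a}}})$, respectively, with $\alpha_0=(1/4+\nu_0)\Omega_{\mathbf{a}}$). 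Integrating by parts against $1/\tilde\sigma$ produces a boundary term of size $s^{-1}$ plus an integral of a bounded function against $d\sigma/\tilde\sigma^2$, which is again $O(s^{-1})$; this is exactly the computation already displayed right before \eqref{4.1}, and it also secures convergence of both integrals at $\infty$. Since $s\asymp x$ along the strip, these two pieces are $O(x^{-1})$.

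The main obstacle is the third integral, $\int^s_{\infty} g(\sigma)(1-\sn^2\sigma)^{-1}\,d\sigma/\tilde\sigma$, because here the bounded factor $g(\sigma)$ multiplies another bounded, non-decaying factor, so naive integration by parts does not obviously gain decay. The device is to write $(A_{\phi}-1)\Omega_{\mathbf{a}}(1-\sn^2\sigma)^{-1}=(g(\sigma-\alpha_0)+g(\sigma+\alpha_0))_{\sigma}$ and then handle $\mathcal{J}_0=\int^s_{\infty}(g(\sigma-\alpha_0)+g(\sigma+\alpha_0))_{\sigma}\,g(\sigma)\,d\sigma/\tilde\sigma$ by a symmetry argument: integrating the $g_{\sigma}(\sigma+\alpha_0)g(\sigma)$ term by parts and shifting $\rho=\sigma+\alpha_0$ converts it, up to $O(s^{-1})+O(s^{-1}_\nu)$ boundary and $\tilde\sigma^{-2}$-contributions, into $-\int g_{\sigma}(\sigma-\alpha_0)g(\sigma)\,d\sigma/\tilde\sigma$, which cancels the $g_{\sigma}(\sigma-\alpha_0)g(\sigma)$ term present in $\mathcal{J}_0$. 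Hence $\mathcal{J}_0\ll s^{-1}$, and letting $x_\nu\to\infty$ (so $s_\nu\to\infty$) along a sequence in $\check{S}_{\mathrm{cut}}(\phi,t_{\infty},\kappa_0,\delta_0)$ removes the $s_\nu^{-1}$ terms. This is precisely the chain of displays culminating in ``$\mathcal{I}_0\ll s^{-1}$'' above.

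Combining the three estimates, each of the three integrals in the asserted decomposition is $O(x^{-1})$, hence so is their sum $\int^x_{\infty}F_1(\psi_0,b_0)\,d\xi/\xi$, with the integrals evaluated along the topologically specified contour in $\check{S}_{\mathrm{cut}}(\phi,t_{\infty},\kappa_0,\delta_0)$ that makes each integral single-valued (as arranged in Remark \ref{rem2.0}). The only genuinely new input beyond Lemma \ref{lem4.2} and the change of variables is the cancellation identity for $\mathcal{J}_0$; everything else is bookkeeping with boundary terms and the elementary bound $\int^s_{\infty}(\text{bounded})\,d\sigma/\tilde\sigma^2\ll s^{-1}$.
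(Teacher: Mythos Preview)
Your proposal is correct and follows essentially the same route as the paper: the decomposition \eqref{4.1}, the integration-by-parts treatment of the first two integrals via Lemma~\ref{lem4.2}, and the shift-and-cancel argument for $\mathcal{J}_0$ are exactly the paper's own steps (indeed you are paraphrasing the displays that appear in Section~\ref{ssc4.1} immediately before the proposition). The only point worth noting is notational: the paper's ``$\mathcal{I}_0\ll s^{-1}$'' is a typo for $\mathcal{J}_0\ll s^{-1}$, which you have implicitly corrected.
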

Observe that
\begin{align*}
\int^x_{\infty} F_2(\psi_0) \frac{d\xi}{\xi^2}
=& \int^x_{\infty} \frac{2( 2(\theta_0-\theta_1)\theta_{\infty} \psi_0 +
(\theta_0-\theta_1)^2+\theta_{\infty}^2 )}{(1-\psi_0^2)(A_{\phi}-\psi_0^2)}
\frac{d\xi}{\xi^2}
\\
=&\frac 1{A_{\phi}(A_{\phi}-1)} \biggl( 2(\theta_0-\theta_1)\theta_{\infty}
A_{\phi}^{1/2} \int^s_{\infty} \Bigl(\frac{A_{\phi} \sn\, \sigma}{1-A_{\phi}\sn^2
\sigma}-\frac{\sn\,\sigma}{1-\sn^2\sigma}\Bigr) \frac{d\sigma}{\tilde{\sigma}^2} 
\\
& + ((\theta_0-\theta_1)^2+\theta_{\infty}^2) \int^s_{\infty} 
\Bigl(\frac{A_{\phi}}{1-A_{\phi}\sn^2\sigma} -\frac 1{1-\sn^2\sigma} \Bigr)
\frac{d\sigma}{\tilde{\sigma}^2} \biggr). 
\end{align*}
In the last line
\begin{align*}
 \int^s_{\infty} 
\frac{A_{\phi}}{1-A_{\phi}\sn^2\sigma} 
\frac{d\sigma}{\tilde{\sigma}^2}  
=& {A_{\phi}} \int^s_{\infty} \Bigl(
\frac{g(\sigma-\Omega_{\mathbf{a}}/4)
 +g(\sigma+ \Omega_{\mathbf{a}}/4)}{(1-A_{\phi})\Omega_{\mathbf{a}} } +\sigma
\Bigr)_{\! \sigma} \frac{d\sigma}
{\tilde{\sigma}^2}
\\
=& -A_{\phi}s^{-1} +O(s^{-2}),
\end{align*}
and the remaining three integrals are $O(s^{-2}).$ Thus we have the following.
\begin{prop}\label{prop4.4}
In $\check{S}_{\mathrm{cut}}(\phi, t_{\infty},\kappa_0,\delta_0)$,
$$
\int^x_{\infty} F_2(\psi_0) \frac{d\xi}{\xi^2} = -\frac{2((\theta_0-\theta_1)^2
+\theta_{\infty}^2)}{A_{\phi}-1} x^{-1} +O(x^{-2}).
$$
\end{prop}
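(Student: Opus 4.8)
The plan is to reduce $\int^x_\infty F_2(\psi_0)\,\xi^{-2}\,d\xi$ to a fixed linear combination of four model integrals whose primitives are supplied by Lemma~\ref{lem4.2}, and then to single out the one that produces a term of exact order $x^{-1}$.

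First I would pass to the variable $\sigma=(\xi-x_0)/2$, writing $\psi_0=A_\phi^{1/2}\sn\sigma$ so that $1-\psi_0^2=1-A_\phi\sn^2\sigma$ and $A_\phi-\psi_0^2=A_\phi(1-\sn^2\sigma)$, and $d\xi/\xi^2=d\sigma/(2\tilde\sigma^2)$ with $\tilde\sigma=\sigma+x_0/2$ (note that $\tilde s=x/2$ exactly for $s=(x-x_0)/2$). Via the partial-fraction identity $\bigl((1-A_\phi\sn^2\sigma)(1-\sn^2\sigma)\bigr)^{-1}=(A_\phi-1)^{-1}\bigl(A_\phi(1-A_\phi\sn^2\sigma)^{-1}-(1-\sn^2\sigma)^{-1}\bigr)$, and cancellation of the overall factor $2$ in the numerator of $F_2$ against the $\tfrac12$ in $d\xi/\xi^2$, the integral becomes $\bigl(A_\phi(A_\phi-1)\bigr)^{-1}$ times the combination of $\int^s_\infty\frac{A_\phi\sn\sigma}{1-A_\phi\sn^2\sigma}\frac{d\sigma}{\tilde\sigma^2}$ and $\int^s_\infty\frac{\sn\sigma}{1-\sn^2\sigma}\frac{d\sigma}{\tilde\sigma^2}$, weighted by $2(\theta_0-\theta_1)\theta_\infty A_\phi^{1/2}$, and of $\int^s_\infty\frac{A_\phi}{1-A_\phi\sn^2\sigma}\frac{d\sigma}{\tilde\sigma^2}$ and $\int^s_\infty\frac{1}{1-\sn^2\sigma}\frac{d\sigma}{\tilde\sigma^2}$, weighted by $(\theta_0-\theta_1)^2+\theta_\infty^2$. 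All four converge absolutely, since $F_2(\psi_0)$ is bounded on $\check{S}_{\mathrm{cut}}(\phi,t_\infty,\kappa_0,\delta_0)$ (with $\psi_0$ staying away from $\pm1$ and $\pm A_\phi^{-1/2}$ by the $\delta_0$-excision of $\mathcal{Q}$) while $\int^x_\infty\xi^{-2}d\xi$ converges.

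Next, in each model integral I would write the integrand as $P'(\sigma)$ with $P$ the corresponding primitive from Lemma~\ref{lem4.2} and integrate by parts: $\int^s_\infty P'(\sigma)\tilde\sigma^{-2}d\sigma=P(s)\tilde s^{-2}+2\int^s_\infty P(\sigma)\tilde\sigma^{-3}d\sigma$, the contribution at infinity vanishing. The decisive point is that the primitives of $\frac{\sn u}{1-A_\phi\sn^2u}$, $\frac{\sn u}{1-\sn^2u}$ and $\frac{1}{1-\sn^2u}$ are bounded on $\check{S}_{\mathrm{cut}}$ — they are combinations of the bounded function $g$ of Section~\ref{ssc4.1}, with no nonperiodic linear term — so these three integrals are $O(s^{-2})=O(x^{-2})$; whereas the primitive of $\frac{1}{1-A_\phi\sn^2u}$ carries the extra term ``$+u$'' in the third formula of Lemma~\ref{lem4.2}, so the primitive of $\frac{A_\phi}{1-A_\phi\sn^2u}$ is a bounded function plus $A_\phi\sigma$. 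For that integral the bounded part contributes $O(x^{-2})$, and the linear part gives $A_\phi\bigl(s\tilde s^{-2}+2\int^s_\infty\sigma\tilde\sigma^{-3}d\sigma\bigr)=A_\phi(\tilde s^{-1}-2\tilde s^{-1})+O(x^{-2})=-A_\phi\tilde s^{-1}+O(x^{-2})=-2A_\phi x^{-1}+O(x^{-2})$.

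Collecting the pieces, only $\bigl(A_\phi(A_\phi-1)\bigr)^{-1}\bigl((\theta_0-\theta_1)^2+\theta_\infty^2\bigr)\int^s_\infty\frac{A_\phi}{1-A_\phi\sn^2\sigma}\frac{d\sigma}{\tilde\sigma^2}$ survives at order $x^{-1}$, and equals $-\dfrac{2((\theta_0-\theta_1)^2+\theta_\infty^2)}{A_\phi-1}x^{-1}+O(x^{-2})$, which is the assertion. The only genuinely delicate point is the uniform boundedness on $\check{S}_{\mathrm{cut}}(\phi,t_\infty,\kappa_0,\delta_0)$ of the $g$-combinations making up three of the four primitives: this rests on the Boutroux normalisation of $A_\phi$, which keeps $\mathcal{E}_{\mathbf{a}}\sigma$ bounded along the strip direction $e^{i\phi}$, together with the $\delta_0$-discs keeping $\sigma$ off the poles of $\vartheta'/\vartheta$. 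The remainder is bookkeeping of the constants $A_\phi(A_\phi-1)$, the factor $2$ versus $\tfrac12$, and $\tilde s=x/2$.
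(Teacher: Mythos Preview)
Your argument is correct and essentially mirrors the paper's own: the same partial-fraction decomposition into the four model integrals, the same use of the primitives from Lemma~\ref{lem4.2} together with the boundedness of $g$, and the same identification of the lone ``$+u$'' term in the third formula of Lemma~\ref{lem4.2} as the unique source of the $x^{-1}$ contribution. One harmless slip: in your closing remark it is not $\mathcal{E}_{\mathbf{a}}\sigma$ itself that the Boutroux condition keeps bounded---that term grows linearly---but rather the full combination $g(\sigma)=\tfrac{\mathcal{E}_{\mathbf{a}}}{2}\sigma+(\vartheta'/\vartheta)(\sigma/\Omega_{\mathbf{a}},\tau_0)$, since the linear drift exactly cancels the quasi-periodicity correction of $\vartheta'/\vartheta$ along the strip direction; this is already what you invoke from Section~\ref{ssc4.1}, so the argument stands.
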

\subsection{Proof of Proposition \ref{prop4.1}}\label{ssc4.2}
By Proposition \ref{prop3.2}, we have
\begin{equation}\label{4.2}
h(x)= -\int^x_{\infty} F_1(\psi_0,b_0)\frac{d\xi}{\xi} +\int^x_{\infty}
\Bigl(F_2(\psi_0)-\frac 32 F_1(\psi_0,b_0)^2 \Bigr) \frac{d\xi}{\xi^2}
+\frac 12 \mathcal{J}_1 + O(x^{-2\mu})
\end{equation}
with
$$
\mathcal{J}_1 =\int^x_{\infty} \frac 1{A_{\phi}-\psi_0^2} \int^{\xi}_{\infty}
(A_{\phi}-\psi_0^2) (2F_2(\psi_0)-F_1(\psi_0,b_0)^2 ) \frac{d\xi_1}{\xi_1^2}
\frac{d\xi}{\xi}.
$$
Then 
\begin{align*}
\mathcal{J}_1 =& \int^x_{\infty} \frac 1{A_{\phi}-\psi_0^2} 
\frac{d\xi}{\xi} \cdot  \int^{x}_{\infty}
(A_{\phi}-\psi_0^2) (2F_2(\psi_0)-F_1(\psi_0,b_0)^2 ) \frac{d\xi_1}{\xi_1^2}
\\
& - \int^x_{\infty} \int^{\xi}_{\infty} \frac 1{A_{\phi}-\psi_0^2} 
\frac{d\xi_1}{\xi_1} \cdot  
(A_{\phi}-\psi_0^2) (2F_2(\psi_0)-F_1(\psi_0,b_0)^2 ) \frac{d\xi}{\xi^2}
\ll x^{-2},
\end{align*}
since $\int^x_{\infty} (A_{\phi}-\psi_0^2)^{-1} \xi^{-1} d\xi \ll x^{-1}.$
Insertion of $\mathcal{J}_1$ into \eqref{4.2} combined with Propositions
\ref{prop4.3} and \ref{prop4.4} yields the desired expression of $h(x).$
Thus we have Proposition \ref{prop4.1}.
\subsection{Further calculation of integrals for $h(x)$}\label{ssc4.3}
In the expression of $h(x)$ in Proposition \ref{prop4.1}, the second
integral becomes
\begin{align}\notag
\int^x_{\infty} & F_1(\psi_0,b_0)^2 \frac{d\xi}{\xi^2} =
 \frac 2{A_{\phi}} (\theta_0+\theta_1)^2 \int^s_{\infty} \Bigl(\frac 1
{(1-\sn^2\sigma)^2}-\frac 1{1-\sn^2\sigma} \Bigr) \frac{d\sigma}
{\tilde{\sigma}^2}
\\
\notag
&- \frac{(\theta_0+\theta_1)}{A_{\phi}^{3/2}} \int^s_{\infty} 
\frac{(b_0(x_0)-8\Omega^{-1}_{\mathbf{a}} g(\sigma) )\sn\,\sigma }
{(1-\sn^2\sigma)^2} \frac{d\sigma}{\tilde{\sigma}^2}
\\
\notag
&+ \frac 1{8A_{\phi}^2} \int^s_{\infty} \frac{b_0(x_0)^2 -16\Omega^{-1}
_{\mathbf{a}} b_0(x_0) g(\sigma) +64\Omega^{-2}_{\mathbf{a}} g(\sigma)^2)}
{(1-\sn^2\sigma)^2} \frac{d\sigma}{\tilde{\sigma}^2}
\\
\notag
=& \frac{4(\theta_0+\theta_1)^2}{3(A_{\phi}-1)}x^{-1} +\frac{b_0(x_0)^2}
{12A_{\phi}(A_{\phi}-1)}x^{-1} +\frac{4b_0(x_0)}{3A_{\phi}(A_{\phi}-1)\Omega
_{\mathbf{a}} }\int^x_{\infty} \mathfrak{b}(\xi) \frac{d\xi}{\xi^2}
\\
\label{4.3}
&+ \frac{16(\theta_0+\theta_1)}{\Omega_{\mathbf{a}}} \int^x_{\infty} 
\frac{\mathfrak{b}(\xi) \psi_0}{(A_{\phi}-\psi_0^2)^2} \frac{d\xi}{\xi^2}
+ \frac{16}{\Omega_{\mathbf{a}}^2} \int^x_{\infty} 
\frac{\mathfrak{b}(\xi)^2 }{(A_{\phi}-\psi_0^2)^2} \frac{d\xi}{\xi^2}
+O(x^{-2}).
\end{align}
This is obtained by using
\begin{align*}
&\int^s_{\infty}\frac 1{(1-\sn^2\sigma)^2}\frac{d\sigma}{\tilde{\sigma}^2} 
=\frac{A_{\phi}}{3(A_{\phi}-1)}s^{-1}+O(s^{-2}), 
\\
&\int^s_{\infty}\frac 1{1-\sn^2\sigma}\frac{d\sigma}{\tilde{\sigma}^2} \ll
s^{-2}, \quad 
\int^s_{\infty}\frac {\sn\,\sigma}
{(1-\sn^2\sigma)^2}\frac{d\sigma}{\tilde{\sigma}^2} \ll s^{-2}
\end{align*}
and
$$
\int^s_{\infty} \frac{g(\sigma)}{(1-\sn^2\sigma)^2} \frac{d\sigma}{\tilde{\sigma}
^2} = -\frac{A_{\phi}}{3(A_{\phi}-1)} \int^s_{\infty} g(\sigma) \frac{d\sigma}
{\tilde{\sigma}^2} +O(s^{-2}).
$$
In deriving the last equality we note the following:
$$
\int^s_{\infty}(g(\sigma+\alpha_0)+g(\sigma-\alpha_0))_{\sigma}g(\sigma)\frac
{d\sigma}{\tilde{\sigma}^2}, \,\,\,
\int^s_{\infty}(g_{\sigma}(\sigma+\alpha_0)
+g_{\sigma}(\sigma-\alpha_0))_{\sigma}g_{\sigma} (\sigma)\frac
{d\sigma}{\tilde{\sigma}^2} \ll s^{-2}, 
$$
which are shown by the same way as in the proof of $\mathcal{J}_0 \ll s^{-1}$
in Section \ref{ssc4.1}.
By \eqref{4.1} and \eqref{4.3}, $h(x)$ is written in the form
\begin{align*}
h(x) =& -\frac{2(2\theta_0^2+2\theta_1^2 +\theta_{\infty}^2)}{A_{\phi}-1}x^{-1}
-2(\theta_0+\theta_1) \int^x_{\infty} \frac{\psi_0}{A_{\phi}-\psi_0^2} \frac
{d\xi}{\xi} 
\\
& + \frac {b_0(x_0)}2 
 \int^x_{\infty} \frac 1{A_{\phi}-\psi_0^2} \frac{d\xi}{\xi}
-  \frac{4}{\Omega_{\mathbf{a}}} \int^x_{\infty} \frac{\mathfrak{b}(\xi)}
{A_{\phi}-\psi_0^2} \frac{d\xi}{\xi}
\\
& -\frac{b_0(x_0)^2}{8A_{\phi}(A_{\phi}-1)}x^{-1} -\frac{2b_0(x_0)}{A_{\phi}
(A_{\phi}-1)\Omega_{\mathbf{a}}} \int^x_{\infty} \mathfrak{b}(\xi) \frac{d\xi}
{\xi^2}
\\
& -\frac{24}{\Omega_{\mathbf{a}}} (\theta_0+\theta_1) \int^x_{\infty}
\frac{\mathfrak{b}(\xi)\psi_0}{(A_{\phi}-\psi_0^2)^2} \frac{d\xi}{\xi^2}
 -\frac{24}{\Omega_{\mathbf{a}}^2}  \int^x_{\infty}
\frac{\mathfrak{b}(\xi)^2}{(A_{\phi}-\psi_0^2)^2} \frac{d\xi}{\xi^2}
+O(x^{-2}).
\end{align*}
\subsection{Proof of Theorem \ref{thm2.3}}\label{ssc4.4}
Recalling Remark \ref{rem4.1} and combining
$$
\int^x_{\infty} (A_{\phi}-\psi_0^2)F_2(\psi_0)\frac{d\xi}{\xi^2} =
-2((\theta_0-\theta_1)^2+\theta_{\infty}^2)x^{-1}+O(x^{-2})
$$
with the second equality of Proposition \ref{prop3.2}, we obtain Theorem 
\ref{thm2.3} by the same argument as in the derivation  of Theorems \ref{thm2.1}
and \ref{thm2.2}. Furthermore we have
\begin{align*}
b(x)= & b_0(x)+b'_0(x)h(x) -4(2\theta_0^2 +2\theta_1^2 +\theta_{\infty}^2)x^{-1}
\\
&- \frac{16(\theta_0+\theta_1)}{\Omega_{\mathbf{a}}} \int^x_{\infty}
\frac{ \mathfrak{b}(\xi) \psi_0}{A_{\phi}-\psi_0^2} \frac{d\xi}{\xi^2}
-\frac{16}{\Omega_{\mathbf{a}}^2} \int^x_{\infty} \frac{\mathfrak{b}(\xi)^2}
{A_{\phi}-\psi_0^2} \frac{d\xi}{\xi^2} +O(x^{-2})
\end{align*}
as in Remark \ref{rem2.1}.


\end{document}